\numberwithin{equation}{section}
\theoremstyle{plain}
\newtheorem{thm}{Theorem}[section]
\newtheorem{lemma}{Lemma}[section]
\newtheorem{corollary}{Corollary}[section]
\newtheorem{definition}{Definition}[section]
\theoremstyle{remark}
\newtheorem{remark}{Remark}[section]
\def\citeapos#1{\citeauthor{#1}'s (\citeyear{#1})}
\newcommand{\vxx}{v_x }
\newcommand{\vyy}{v_y }
\newcommand{\U}{{ \mathrm{\scriptscriptstyle U} }}
\newcommand{\HH}{{ \mathrm{\scriptscriptstyle H} }}
\newcommand{\KL}{{ \mathrm{\scriptscriptstyle KL} }}
\newcommand{\bD}{{ \;||\; }}
\newcommand{\mymid}{\!\mid\!}
\newcommand{\mbd}{_{\mathbb{R}^d}}
\newcommand{\T}{{ \mathrm{\scriptscriptstyle T} }}
\newcommand{\rd}{\mathrm{d}}
\begin{document}

\begin{frontmatter}
\title{Harmonic Bayesian prediction under $\alpha$-divergence}
\runtitle{A Bayesian prediction under $\alpha$-divergence}

\begin{aug}
\author{\fnms{Yuzo} \snm{Maruyama}\thanksref{m1}
 \ead[label=e1]{maruyama@csis.u-tokyo.ac.jp}}
 \and
\author{\fnms{Toshio} \snm{Ohnishi}\thanksref{m2}
\ead[label=e2]{ohnishi@econ.kyushu-u.ac.jp}}


\runauthor{Y. Maruyama and T. Ohnishi}

\affiliation{The University of Tokyo and Kyushu University}
\address{University of Tokyo\thanksmark{m1} and Kyushu University\thanksmark{m2} \\
\printead{e1,e2}}
\end{aug}
\begin{abstract}
We investigate Bayesian shrinkage methods for constructing predictive distributions.
We consider the multivariate normal model with a known covariance matrix
and show that the Bayesian predictive density with respect to Stein's harmonic prior
dominates the best invariant Bayesian predictive density, when the dimension is greater than three.
Alpha-divergence from the true distribution to a predictive distribution is adopted as a loss function.
\end{abstract}
\begin{keyword}[class=AMS]
\kwd[Primary ]{62C20}
\kwd[; secondary ]{62J07}
\end{keyword}

\begin{keyword}
\kwd{harmonic prior}
\kwd{minimaxity}
\kwd{Bayesian predictive density}
\end{keyword}

\end{frontmatter}

\section{Introduction}
\label{sec:intro}
Let $X\sim N_d(\mu,\vxx I)$ and $Y\sim N_d(\mu,\vyy I)$
be independent $d$-dimensional multivariate normal vectors with common unknown mean $\mu$.
We assume that $d\geq 3$ and that $\vxx $ and $\vyy $ are known.
Let $\phi(\cdot,\sigma^2)$ be the probability density of $N_d(0,\sigma^2I)$.
Then the probability density of $X$ and that of $Y$ are $\phi(x-\mu,\vxx )$ and $\phi(y-\mu,\vyy )$,
respectively.

Based on only observing $X=x$, we consider the problem of obtaining a predictive density
$\hat{p}(y\mymid x)$ for $Y$ that is close to the true density $\phi(y-\mu,\vyy )$.
In most earlier papers on such prediction problems, 
a predictive density $\hat{p}(y\mymid x)$ 
is often evaluated by 
\begin{equation}\label{KL-loss}
 D_\KL\left\{\phi(y-\mu,\vyy )\bD\hat{p}(y\mymid x)\right\}
  =\int\mbd \phi(y-\mu,\vyy ) \log
  \frac{\phi(y-\mu,\vyy )}{\hat{p}(y\mymid x)} \rd y,
\end{equation}
which is called the Kullback-Leibler divergence loss (KL-div loss) from 
$\phi(y-\mu,\vyy )$ to $\hat{p}(y\mymid x)$.
The overall quality of the procedure $\hat{p}(y\mymid x)$
for each $\mu$ is then
summarized by the Kullback-Leibler divergence risk 
\begin{equation}\label{KL-risk}
 R_\KL\{\phi(y-\mu,\vyy )\bD \hat{p}(y\mymid x)\}
  =\int\mbd D_\KL\left\{\phi(y-\mu,\vyy )\bD \hat{p}(y\mymid x)\right\}\phi(x-\mu,\vxx )\rd x.
\end{equation}

\cite{Aitchison-1975} showed that 
the Bayesian solution with respect to
a prior $\pi(\mu)$ under KL-div loss given by \eqref{KL-loss} is 
the Bayesian predictive density
\begin{equation}\label{Bayes-predictive}
  \hat{p}_{\pi}(y\mymid x) 
    =\int\mbd \phi(y-\mu,\vyy ) \pi(\mu\mymid x)\rd\mu  ,
\end{equation}
where 
$ \pi(\mu \mymid x)=\phi(x-\mu,\vxx )\pi(\mu)/m_\pi(x,\vxx )$
is the posterior density corresponding to $\pi(\mu)$ and 
\begin{equation}\label{eq:marginal}
 m_\pi(x,v)=\int\mbd \phi(x-\mu,v)\pi(\mu)\rd\mu 
\end{equation}
is the marginal density of $X \sim N_d(\mu,vI)$ under the prior $\pi(\mu)$.

For the prediction problems in general, 
many studies suggest the use of the Bayesian predictive
density rather than plug-in densities of the form
\begin{equation*}
 \phi(y-\hat{\mu}(x),\vyy),
\end{equation*}
where $\hat{\mu}(x)$ is an estimated value of $\mu$. 
\cite{Liang-Barron-2004} showed that the Bayesian predictive
density with respect to the uniform prior
\begin{equation}\label{right-invariant-prior-known}
 \pi_\U(\mu)=1,
\end{equation}
which is given by
\begin{equation}\label{eq:best_equivariant}
 \hat{p}_\U(y\mymid x)
  =\int\mbd \phi(y-\mu,\vyy)\pi_\U(\mu\mymid x)\rd\mu =\phi(y-x,\vxx+\vyy)
\end{equation}
is best invariant and minimax.
Although the best invariant Bayesian predictive density is generally a good default procedure,
it has been shown to be inadmissible in some cases.
Specifically, 
\cite{Komaki-2001} showed that 
the Bayesian predictive density with respect to \citeapos{Stein-1974} harmonic prior 
\begin{equation}
\pi_\HH(\mu)=\|\mu\|^{-(d-2)}
\end{equation}
dominates the best invariant Bayesian predictive density  $ \hat{p}_\U(y\mymid x)$.
\cite{George-etal-2006} extended \citeapos{Komaki-2001} result to
general shrinkage priors including \citeapos{Strawderman-1971}  prior.

From a more general viewpoint,
the KL-div loss given by \eqref{KL-loss}
is in the class of $\alpha$-divergence loss ($\alpha$-div loss)
introduced by \cite{Csiszar-1967} and defined by
\begin{equation} \label{def:alpha.div}
 D_{\alpha}\left\{\phi(y-\mu,\vyy )\bD\hat{p}(y\mymid x)\right\}
  =\int\mbd f_{\alpha}\left(\frac{\hat{p}(y\mymid x)}{\phi(y-\mu,\vyy )}\right)\phi(y-\mu,\vyy )\rd y ,
\end{equation}
where
\begin{equation*}
 f_{\alpha}(z)=
  \begin{cases}
   \left\{4/(1-\alpha^2)\right\}\left\{1- z^{(1+\alpha)/2}\right\}, & \ |\alpha| <1, \\
   z\log z, & \ \alpha =1, \\
   -\log z, & \ \alpha = -1.
  \end{cases}
\end{equation*}
When $\alpha=-1$, we have
\begin{align*}
 D_{-1}\left\{\phi(y-\mu,\vyy )\bD\hat{p}(y\mymid x)\right\}
 = D_\KL\left\{\phi(y-\mu,\vyy )\bD\hat{p}(y\mymid x)\right\},
\end{align*}
where $D_\KL$ is given by \eqref{KL-loss}.
When $\alpha=0$, we have
\begin{align*}
 f_0(z)&=4(1-z^{1/2}), \\ 
 D_0\left\{\phi(y-\mu,\vyy )\bD\hat{p}(y\mymid x)\right\}
 &=2\int\mbd \left\{\hat{p}^{1/2}(y\mymid x)-\phi^{1/2}(y-\mu,\vyy )\right\}^2 \rd y,
\end{align*}
where $ \sqrt{D_0\left\{\phi(y-\mu,\vyy )\bD\hat{p}(y\mymid x)\right\}}/2$
is the Hellinger distance between $\hat{p}(y\mymid x)$ and $\phi(y-\mu,\vyy )$.
As in the Kullback-Leibler divergence risk given by \eqref{KL-risk}, the overall quality of the procedure $\hat{p}(y\mymid x)$
for each $\mu$ is 
summarized by the $\alpha$-divergence risk
\begin{equation*}
 R_\alpha\{\phi(y-\mu,\vyy )\bD \hat{p}(y\mymid x) \}
  = \int\mbd D_\alpha\left\{\phi(y-\mu,\vyy )\bD \hat{p}(y\mymid x) \right\}\phi(x-\mu,\vxx )\rd x.
\end{equation*}
\cite{Corcuera-Giummole-1999} showed that a Bayesian predictive density under $\alpha$-div loss
is
\begin{equation} \label{gene-B-density}
 \hat{p}_{\pi}(y\mymid x; \alpha) \propto
\begin{cases}
 \displaystyle
 \left\{\int\mbd \phi^{\frac{1-\alpha}{2}}(y-\mu,\vyy )\phi(x-\mu,\vxx )\pi(\mu)\rd\mu  \right\}^{\frac{2}{1-\alpha}},
 & -1\leq \alpha < 1, \\
 \displaystyle
 \exp\left( \int\mbd \left\{\log \phi(y-\mu,\vyy )\right\}\phi(x-\mu,\vxx )\pi(\mu)\rd\mu \right),
 & \alpha=1.
\end{cases}
\end{equation}
By \eqref{gene-B-density}, in the prediction problem under $\alpha$-div loss with $\alpha=1$
from the Bayesian point of view, the Bayesian solution is the normal
density
\begin{align}
 \hat{p}_{\pi}(y\mymid x; 1)=\phi(y-\hat{\mu}_\pi(x),\vyy ),
\end{align}
where $\hat{\mu}_\pi(x)$ is the posterior mean given by
\begin{equation}
 \hat{\mu}_\pi(x)
  =\int\mbd \mu\pi(\mu\mymid x)\rd\mu =x+\vxx\nabla_x \log m(x,\vxx). 
\end{equation}
In general, the Bayesian prediction problem under $\alpha = 1$ reduces to the
estimation problem under the KL-div loss in the case of the exponential family density.
This is because the exponential family density is closed under the calculation
in \eqref{gene-B-density} with $\alpha = 1$, as pointed out in \cite{Yanagimoto-Ohnishi-2009}.

As demonstrated in \cite{Maruyama-Strawderman-2012}, the $\alpha$-div loss in the case of $\alpha=1$
is written as
\begin{equation*}
 D_1\left\{\phi(y-\mu,\vyy )\bD \phi(y-\hat{\mu}_\pi(x),\vyy )\right\}
  =\frac{\|\hat{\mu}_\pi(x)-\mu\|^2}{2\vyy },
\end{equation*}
and hence the prediction problem under $\alpha=1$ reduces to the estimation problem of $\mu$
under the quadratic loss.
\cite{Stein-1981} showed that
\begin{align}
 E_X\left[\|\hat{\mu}_\pi(X)-\mu\|^2\right]
 =d\vxx +4\vxx^2 E_X\left[\frac{\Delta_x m_\pi^{1/2}(X,\vxx)}{m_\pi^{1/2}(X,\vxx)}\right],
\end{align}
which implies that the risk difference under $\alpha=1$ is expressed as
\begin{equation} \label{eq:beautiful_1}
 \begin{split}
  & R_1\{\phi(y-\mu,\vyy)\bD \hat{p}_\U(y\mymid x; 1) \}-
  R_1\{\phi(y-\mu,\vyy)\bD \hat{p}_\pi(y\mymid x; 1) \}  \\
  &=\frac{2\vxx^2}{\vyy}
  E_X\left[-\frac{\Delta_x m^{1/2}_\pi(X,\vxx)}{m^{1/2}_\pi(X,\vxx)}\right].
 \end{split}
\end{equation}

Under the KL-div loss or $\alpha$-div loss with $\alpha=-1$, 
\cite{George-etal-2006} showed that the risk difference is given by
\begin{equation}\label{eq:beautiful}
 \begin{split}
  & R_{-1}\{\phi(y-\mu,\vyy )\bD \hat{p}_{\U}(y\mymid x;-1) \}-
  R_{-1}\{\phi(y-\mu,\vyy )\bD \hat{p}_\pi(y\mymid x;-1) \}  \\
  &=2\int_{v_*}^{\vxx }E_Z\left[-\frac{\Delta_z m^{1/2}_\pi(Z,v)}{m^{1/2}_\pi(Z,v)}\right]\rd v, 
\end{split}
\end{equation}
where $\hat{p}_{\U}(y\mymid x;-1)$ is given by \eqref{eq:best_equivariant}, $Z\sim N_d(\mu,vI)$
and $v_*=\vxx \vyy /(\vxx +\vyy )$.
From this viewpoint, 
\cite{George-etal-2006} and \cite{Brown-etal-2008} considered the prediction problem 
under $\alpha$-div loss with two extreme cases $\alpha=\pm 1 $
and found a beautiful relationship of risk differences for two cases
via $\Delta_z \{m_\pi(z,v)\}^{1/2}$ for some $v$.
Under both risks $R_1$ and $R_{-1}$, any shrinkage prior of the satisfier
of the superharmonicity 
\begin{equation}\label{super}
 \Delta_z m^{1/2}_\pi(z,v) \leq 0 \  \text{ for }
  \begin{cases}
   \forall v \in (v_*,\vxx) \text{ for }\alpha=-1, \\
   v=\vxx \text{ for }\alpha=1,
  \end{cases}
\end{equation}
implies the improvement over the best invariant Bayesian procedure.
It is well-known that the superharmonicity of $\pi(\mu)$, $ \Delta_\mu \pi(\mu)\leq 0$, 
implies the superharmonicity of $m_\pi(z,v)$, $ \Delta_z m_\pi(z,v)\leq 0$.
Further the superharmonicity of $m_\pi(z,v)$ implies the superharmonicity of $\{m_\pi(z,v)\}^{1/2}$.
Hence the harmonic prior $\pi_\HH(\mu)=\|\mu\|^{-(d-2)}$ is one of 
the satisfiers of the superharmonicity of $\{m_\pi(z,v)\}^{1/2}$.

Because of the relationship given by \eqref{eq:beautiful_1},
\eqref{eq:beautiful} and \eqref{super}, it is of great interest to find 
the corresponding link via $ \Delta_z\{m_\pi(z,v)\}^{1/2}$
for $\alpha$-div loss with general $\alpha\in(-1,1)$.
To our knowledge, decision-theoretic properties seem to depend on the general
structure of the problem (the general type of problem (location, scale), and
the dimension of the parameter space) and on the prior in a Bayesian-setup,
but not on the loss function, as \cite{Brown-1979} pointed out in the estimation problem.

In this paper, we investigate 
the risk difference, $ \mathrm{diff}R_{\alpha,\U,\pi}$, in the case of  $\alpha$-div loss, 
defined by
\begin{equation}\label{diff.alpha}
 \begin{split}
  \mathrm{diff}R_{\alpha,\U,\pi} 
  & =R_\alpha\left\{\phi(y-\mu,\vyy )\bD \hat{p}_{\U}(y\mymid x ; \alpha) \right\} \\ & \quad -
  R_\alpha\left\{\phi(y-\mu,\vyy )\bD \hat{p}_{\pi}(y\mymid x ; \alpha) \right\}.
 \end{split}
\end{equation}
In \eqref{diff.alpha}, $\hat{p}_{\pi}(y\mymid x ; \alpha)$ is given by \eqref{gene-B-density}
and $ \hat{p}_{\U}(y\mymid x ; \alpha)$ is
the Bayesian predictive density under the uniform prior \eqref{right-invariant-prior-known}, the form of which will be derived in
\eqref{bestequiv} of Section \ref{sec:BPD}.
As a generalization of \citeapos{Liang-Barron-2004} result,
$ \hat{p}_{\U}(y\mymid x ; \alpha)$ for general $\alpha\in(-1,1)$ is best invariant and minimax, as shown in Appendix \ref{sec:minimaxity}.
Further, analyzing $\mathrm{diff}R_{\alpha,\U,\pi} $,
we provide some asymptotic results and a non-asymptotic decision-theoretic result.
\begin{description}[leftmargin=7pt]
 \item[Asymptotic results] 
We show not only somewhat expected relationship
\begin{equation}\label{eq:limit_1}
 \lim_{\alpha \to 1-0}\mathrm{diff}R_{\alpha,\U,\pi}
  =\mathrm{diff}R_{1,\U,\pi}, \
  \lim_{\alpha \to -1+0}\mathrm{diff}R_{\alpha,\U,\pi}
  =\mathrm{diff}R_{-1,\U,\pi},
\end{equation}
where $\mathrm{diff}R_{1,\U,\pi}$ and $\mathrm{diff}R_{-1,\U,\pi}$ are given in
\eqref{eq:beautiful_1} and \eqref{eq:beautiful} respectively, but also
the asymptotic relationship for general $\alpha\in(-1,1)$,
\begin{equation}\label{eq:limit_2}
 \lim_{\vxx/\vyy \to +0}\mathrm{diff}R_{\alpha,\U,\pi}
  =\mathrm{diff}R_{1,\U,\pi}.
\end{equation}
Hence, the asymptotic situation $ \vxx/\vyy \to 0$ corresponds to the case $\alpha\to 1$ and 
$ \Delta_z\{m_\pi(z,v)\}^{1/2}$ plays an important role for general $\alpha\in(-1,1)$.
\item[Non-asymptotic result] 
We particularly investigate a decision-theoretic property of
the Bayesian predictive density with respect to $\pi_\HH(\mu)=\|\mu\|^{-(d-2)}$
under $\alpha$-div loss with general $\alpha\in(-1,1)$.
We show that, 
the Bayesian predictive density with respect to $\pi_\HH(\mu)=\|\mu\|^{-(d-2)}$ dominates
the best invariant Bayesian predictive density with respect to $\pi_\U(\mu)=1$
if
\begin{equation*}
 \frac{\vxx}{\vyy} \leq
  \begin{cases}
   \displaystyle
   \frac{d+2}{d(1+\alpha)}
   & \displaystyle\text{if }\frac{2}{1-\alpha}\text{ is a positive integer}, \\
   \displaystyle
   \left(\frac{2}{1-\alpha}\right)^2\frac{d+2}{d}\frac{1-\left\{\kappa-2/(1-\alpha)\right\}}{2\kappa(\kappa-1)}
   &\text{ otherwise},
  \end{cases}
\end{equation*}
where $\kappa$ is the smallest integer larger than $2/(1-\alpha)$.
\end{description}
The organization of this paper is as follows. In Section \ref{sec:BPD},
we derive the exact form of $\hat{p}_{\pi}(y\mymid x ; \alpha)$,
propose a general sufficient condition for
$\mathrm{diff}R_{\alpha,\U,\pi}\geq 0$, where $\mathrm{diff}R_{\alpha,\U,\pi}$ is given by
\eqref{diff.alpha},
and demonstrate the asymptotic relationship described in \eqref{eq:limit_1} and \eqref{eq:limit_2}.
In Section \ref{sec:harmonic}, we propose the non-asymptotic result under the harmonic prior
$\pi_\HH(\mu)=\|\mu\|^{-(d-2)}$ described in the above. 
Some technical proofs are given in Sections \ref{sec:minimaxity} and \ref{sec:proof.harmonic} of Appendix.

\section{Bayesian predictive density under $\alpha$-divergence loss}
\label{sec:BPD}
As in \eqref{gene-B-density}, the Bayes predictive density under $\alpha$-div loss is
\begin{align}\label{eq:general_Bayes}
 \hat{p}_\pi(y\mymid x ; \alpha) \propto \left\{\int\mbd
 \phi\left(x-\mu,\vxx \right)\phi^\beta( y -\mu,\vyy ) \pi(\mu)\rd\mu \right\}^{1/\beta},
\end{align}
where
\begin{equation}\label{def:beta}
 \beta=\frac{1-\alpha}{2}.
\end{equation}
Clearly, it follows from $\alpha\in(-1,1)$ that $0<\beta<1$.
Let
\begin{equation}\label{gammagamma}
 \gamma=\frac{1}{1+\beta\vxx /\vyy}.
\end{equation}
Since the relation of completing squares with respect to $\mu$, for  
$\phi\left(x-\mu,\vxx \right)\phi^\beta( y -\mu,\vyy )$,
is given by
\begin{align*}
 & \frac{1}{\vxx }\|x-\mu\|^2+\frac{\beta}{\vyy }\|y-\mu\|^2 \\
 &=\frac{1}{\vxx }\left(\|x-\mu\|^2+\frac{1-\gamma}{\gamma}\|y-\mu\|^2\right) \\
 &=\frac{1}{\vxx }\left(\frac{1}{\gamma}\|\mu-\{\gamma x+(1-\gamma)y\}\|^2- \frac{\|\gamma x+(1-\gamma)y\|^2}{\gamma}+\|x\|^2+\frac{1-\gamma}{\gamma}\|y\|^2\right) \\
 &=\frac{1}{\vxx }\left\{\frac{1}{\gamma}\|\mu-\{\gamma x+(1-\gamma)y\}\|^2+
 (1-\gamma)\|y- x\|^2 \right\}\\
 & =\frac{1}{\vxx \gamma}\|\mu-\{\gamma x+(1-\gamma)y\}\|^2+\beta\frac{\gamma}{\vyy}\|y-x\|^2,
\end{align*}
we have the identity,
\begin{equation}\label{eq:hirei}
 \begin{split}
  &\phi\left(x-\mu,\vxx \right)\phi^\beta( y -\mu,\vyy ) \\      
  &=\gamma^{(1-\beta)d/2}\phi(\gamma x+(1-\gamma)y-\mu,\vxx \gamma)\phi^\beta(y-x,\vyy /\gamma). 
 \end{split}
\end{equation}
Under the uniform prior $\pi_\U(\mu)=1$, we have, from \eqref{eq:hirei},
\begin{align*}
 \int\mbd \phi\left(x-\mu,\vxx \right)\phi^\beta( y  -\mu,\vyy )\pi_\U(\mu)\rd\mu 
 =\gamma^{(1-\beta)d/2}\phi^\beta(y-x,\vyy /\gamma)
\end{align*}
in \eqref{eq:general_Bayes}. Therefore the Bayesian predictive density under the uniform prior is
\begin{equation}\label{bestequiv}
 \hat{p}_{\U}(y\mymid x ; \alpha)
  =\phi(y-x,\vyy /\gamma)=\phi(y-x,\vyy+\beta\vxx),
\end{equation}
which is the target predictive density so that the risk difference
\begin{equation*}
 \mathrm{diff}R_{\alpha,\U,\pi}
  = R_\alpha\left\{\phi(y-\mu,\vyy )\bD \hat{p}_{\U}(y\mymid x ; \alpha) \right\}-
  R_\alpha\left\{\phi(y-\mu,\vyy )\bD \hat{p}_{\pi}(y\mymid x ; \alpha) \right\}
\end{equation*}
is going to be investigated in this paper.
As shown in Appendix \ref{sec:minimaxity}, 
$ \hat{p}_{\U}(y\mymid x ; \alpha)$ for general $\alpha\in(-1,1)$ is best invariant and minimax,
which is regarded as a generalization of \citeapos{Liang-Barron-2004} minimaxity result.
Hence $ \hat{p}_{\pi}(y\mymid x ; \alpha)$
with $\mathrm{diff}R_{\alpha,\U,\pi}\geq 0$ for all $\mu\in\mathbb{R}^d$ is minimax.

The exact form of Bayes predictive density $\hat{p}_\pi(y\mymid x; \alpha)$ for \eqref{eq:general_Bayes}
with normalizing constant, which is regarded as a generalization of Theorem 1 of \cite{Komaki-2001}
as well as
Lemma 2 of \cite{George-etal-2006}, is provided as follows.
\begin{thm}\label{thm:Bayes.predictive.density}
The Bayes predictive density under $\pi(\mu)$ is
 \begin{equation}\label{form:Bayes.predictive.density}
  \hat{p}_\pi(y\mymid x; \alpha)
   =\frac{m^{1/\beta}_\pi(\gamma x+(1-\gamma)y,\vxx\gamma)}
   {E_{Z_1}\left[m^{1/\beta}_\pi(x+\xi Z_1,\vxx\gamma)\right]}\hat{p}_{\U}(y\mymid x;\alpha),
 \end{equation}
 where $Z_1\sim N_d(0,I)$ and
 \begin{equation}\label{eq:xixixi}
  \xi = (1-\gamma)(\vyy /\gamma)^{1/2}.
 \end{equation}
\end{thm}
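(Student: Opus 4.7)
The plan is to obtain \eqref{form:Bayes.predictive.density} by plugging the factorization identity \eqref{eq:hirei} directly into the proportionality expression \eqref{eq:general_Bayes} for $\hat{p}_\pi(y\mid x;\alpha)$, and then identifying the normalizing constant as an expectation under the best invariant predictive density.

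First I would apply \eqref{eq:hirei} inside the integrand of \eqref{eq:general_Bayes}. Since the factor $\gamma^{(1-\beta)d/2}\phi^\beta(y-x,v_y/\gamma)$ is independent of $\mu$, it pulls out of the $\mu$-integral, leaving
\[
\int\mbd \phi(x-\mu,\vxx)\phi^\beta(y-\mu,\vyy)\pi(\mu)\rd\mu
= \gamma^{(1-\beta)d/2}\phi^\beta(y-x,\vyy/\gamma)\, m_\pi\bigl(\gamma x+(1-\gamma)y,\vxx\gamma\bigr),
\]
by the definition \eqref{eq:marginal} of $m_\pi$. Raising to the power $1/\beta$ then gives
\[
\hat{p}_\pi(y\mymid x;\alpha) \propto \phi(y-x,\vyy/\gamma)\, m_\pi^{1/\beta}\bigl(\gamma x+(1-\gamma)y,\vxx\gamma\bigr),
\]
and the first factor is exactly $\hat{p}_\U(y\mid x;\alpha)$ by \eqref{bestequiv}. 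This is already the asserted form up to the normalizing constant.

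Next I would compute the normalizing constant $C$ so that $\int \hat{p}_\pi(y\mid x;\alpha)\rd y = 1$, i.e.
\[
C = \int\mbd \hat{p}_\U(y\mymid x;\alpha)\, m_\pi^{1/\beta}\bigl(\gamma x+(1-\gamma)y,\vxx\gamma\bigr)\rd y
= E_Y\!\left[m_\pi^{1/\beta}\bigl(\gamma x+(1-\gamma)Y,\vxx\gamma\bigr)\right],
\]
where $Y\sim N_d(x,(\vyy/\gamma)I)$. Writing $Y = x + (\vyy/\gamma)^{1/2} Z_1$ with $Z_1\sim N_d(0,I)$, the argument of $m_\pi^{1/\beta}$ becomes $\gamma x + (1-\gamma)Y = x + (1-\gamma)(\vyy/\gamma)^{1/2}Z_1 = x + \xi Z_1$ with $\xi$ as in \eqref{eq:xixixi}. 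Hence $C = E_{Z_1}\!\left[m_\pi^{1/\beta}(x+\xi Z_1,\vxx\gamma)\right]$, and dividing yields \eqref{form:Bayes.predictive.density}.

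The argument is essentially bookkeeping once the identity \eqref{eq:hirei} is in hand, so there is no serious obstacle; the only mildly delicate step is the change of variables in the normalizer, where one needs to verify that the coefficient $(1-\gamma)(\vyy/\gamma)^{1/2}$ matches the stated $\xi$. Implicit throughout is the assumption that $m_\pi^{1/\beta}$ and its associated expectation are finite, which one should note holds for the priors of interest (in particular $\pi_\U$ and $\pi_\HH$) in subsequent applications of the theorem.
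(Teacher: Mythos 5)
Your proposal is correct and follows essentially the same route as the paper: substitute the factorization identity \eqref{eq:hirei} into \eqref{eq:general_Bayes} to get $\hat{p}_\pi(y\mymid x;\alpha)\propto \phi(y-x,\vyy/\gamma)\,m_\pi^{1/\beta}(\gamma x+(1-\gamma)y,\vxx\gamma)$, then evaluate the normalizer by the change of variables $y=x+(\vyy/\gamma)^{1/2}z_1$, which turns the argument into $x+\xi Z_1$. Your phrasing of the normalizer as an expectation over $Y\sim N_d(x,(\vyy/\gamma)I)$ is just the paper's substitution $z_1=(\gamma/\vyy)^{1/2}(y-x)$ in different notation, so there is nothing to add.
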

\begin{proof}
By \eqref{eq:general_Bayes}, \eqref{eq:hirei} and \eqref{bestequiv}, we have
 \begin{equation}\label{eq:hatpppp}
  \hat{p}_\pi(y\mymid x; \alpha) 
   \propto \phi(y-x,\vyy /\gamma)m^{1/\beta}_\pi(\gamma x+(1-\gamma)y,\vxx\gamma).
 \end{equation}
The normalizing constant of \eqref{eq:hatpppp} is 
 \begin{align*}
  &\int\mbd \phi(y-x,\vyy /\gamma)m^{1/\beta}_\pi(\gamma x+(1-\gamma)y,\vxx\gamma) \rd y   \\
  &=\int\mbd \phi(z_1,1) m^{1/\beta}_\pi\left(x+(1-\gamma)(\vyy /\gamma)^{1/2}z_1,\vxx\gamma\right) \rd z_1 \\
  &=E_{Z_1}\left[m^{1/\beta}_\pi(x+\xi Z_1,\vxx\gamma)\right],
 \end{align*}
where the first equality is from the transformation, $z_1=(\gamma/\vyy )^{1/2}(y-x)$.
\end{proof}

In the following, as a generalization of the Bayes predictive density, 
we consider
 \begin{equation}\label{form:Bayes.predictive.density.f}
  \hat{p}_f(y\mymid x; \alpha)
   =\frac{f(\gamma x+(1-\gamma)y)}
   {E_{Z_1}\left[f(x+\xi Z_1)\right]}\hat{p}_{\U}(y\mymid x;\alpha)
 \end{equation}
 where $f:\mathbb{R}^d\to\mathbb{R}_+$ is general.
 As in the proof of Theorem \ref{thm:Bayes.predictive.density},
 $\int\hat{p}_f(y\mymid x; \alpha)\rd y =1$ follows. Also
 $\hat{p}_f(y\mymid x; \alpha)$ is nonnegative for any $y\in\mathbb{R}^d$ and hence
 $\hat{p}_f(y\mymid x; \alpha)$ is regarded as a predictive density.
 
By the definition of the $\alpha$-div loss given by \eqref{def:alpha.div}, the risk difference
between $\hat{p}_{\U}$ and $\hat{p}_f$ is written as
\begin{equation}\label{risk.diff}
 \begin{split}
  &\mathrm{diff}R_{\alpha,\U,f} \\
  & =R_\alpha\{\phi(y-\mu,\vyy )\bD \hat{p}_{\U}(y\mymid x; \alpha) \}-
  R_\alpha\{\phi(y-\mu,\vyy ) \bD \hat{p}_f(y\mymid x; \alpha)\} \\
  &=\frac{1}{\beta(1-\beta)}
  \int_{\mathbb{R}^{2d}}\left\{\left(\frac{\hat{p}_f(y\mymid x;\alpha)}{\phi(y-\mu,\vyy )}\right)^{1-\beta}
  -\left(\frac{\hat{p}_{\U}(y\mymid x;\alpha)}{\phi(y-\mu,\vyy )}\right)^{1-\beta}
  \right\}  \\
  & \qquad  \times \phi(x-\mu,\vxx )\phi(y-\mu,\vyy ) \rd x \rd y .
 \end{split}      
\end{equation}
Then we have a following result.
\begin{thm}\label{thm:sufficient.1}
\begin{enumerate}
 \item\label{1.thm:sufficient.1}
       The risk difference $\mathrm{diff}R_{\alpha,\U,f}$ given by \eqref{risk.diff}
is written by $E[\rho(W,Z)]$ where $W\sim N_d(\mu,\vxx\gamma)$, $Z\sim N_d(0,I)$ and
\begin{equation}\label{eq:rho}
  \rho(w,z)=\frac{4\gamma^{(1-\beta)d/2}}{\beta^2f^{\beta-1}(w)} 
  \int_0^{\xi}t\frac{-\Delta_w\varrho(w+tz;t;f)}{\varrho^{2/\beta-1}(w+tz;t;f)}\rd t
\end{equation} 
where 
\begin{equation}\label{eq:varrho}
 \varrho(u;t;f)
  =\left\{
    E_{Z_1} \left[f(tZ_1+u)\right]
   \right\}^{\beta/2}, \text{ for }Z_1\sim N_d(0,I).
 \end{equation}
 \item\label{2.thm:sufficient.1}
 A sufficient condition for $\mathrm{diff}R_{\alpha,\U,f}\geq 0$
for $\forall \mu\in\mathbb{R}^d$ is 
\begin{align}\label{eq:suffsuff}
 \Delta_u \varrho(u;t;f)
 \leq 0\quad\forall u\in\mathbb{R}^d, \ 0\leq \forall  t \leq \xi.
\end{align}
\end{enumerate}
\end{thm}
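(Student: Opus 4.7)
My plan is to rewrite $\mathrm{diff}R_{\alpha,\U,f}$ so that the Laplacian of $\varrho$ appears explicitly, and then read off the sign condition. Starting from \eqref{risk.diff}, I factor $\hat{p}_f=g\,\hat{p}_\U$ with $g=f(\gamma x+(1-\gamma)y)/E_{Z_1}[f(x+\xi Z_1)]$; combining the identity $(\hat{p}_\U/\phi)^{1-\beta}\phi(y-\mu,\vyy)=\phi^{1-\beta}(y-x,\vyy/\gamma)\phi^\beta(y-\mu,\vyy)$ with \eqref{eq:hirei} reduces the $(x,y)$-weight in \eqref{risk.diff} to $\gamma^{(1-\beta)d/2}\phi(u-\mu,\vxx\gamma)\phi(v,\vyy/\gamma)$ in the new variables $u=\gamma x+(1-\gamma)y$, $v=y-x$, which have unit Jacobian. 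Treating $(u,v)$ as realizations of independent $W\sim N_d(\mu,\vxx\gamma)$ and $V\sim N_d(0,\vyy/\gamma)$ and rescaling $Z:=V/\sqrt{\vyy/\gamma}$ so that $\xi Z=(1-\gamma)V$, I obtain
\begin{equation*}
\mathrm{diff}R_{\alpha,\U,f}=\frac{\gamma^{(1-\beta)d/2}}{\beta(1-\beta)}\,E_{W,Z}\bigl[g^{1-\beta}-1\bigr],
\end{equation*}
with $g=f(W)/F(W-\xi Z,\xi)$ and $F(u,t):=E_{Z_1}[f(u+tZ_1)]=\varrho^{2/\beta}(u;t;f)$. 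By the symmetry of $Z$, I may replace $W-\xi Z$ by $W+\xi Z$ inside the expectation.

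The next step is an interpolation in $t\in[0,\xi]$. Setting $\tilde{g}(t):=f(W)/F(W+tZ,t)$ gives $\tilde{g}(0)=1$ and $\tilde{g}(\xi)=g$, so $g^{1-\beta}-1=\int_0^{\xi}(d/dt)\tilde{g}^{1-\beta}(t)\,\rd t$. The heat-equation identity $\partial_tF(u,t)=t\Delta_uF(u,t)$ (an immediate consequence of Stein's lemma on the $Z_1$-integrand defining $F$) yields $(d/dt)F(W+tZ,t)=\nabla_uF\cdot Z+t\Delta_uF$. Taking $E_Z$ and applying Stein's lemma in $Z$ to the term $Z_iF^{-(2-\beta)}\partial_{u_i}F$ produces
\begin{equation*}
E_Z\bigl[F^{-(2-\beta)}(\nabla_uF\cdot Z+t\Delta_uF)\bigr]=E_Z\bigl[-(2-\beta)tF^{-(3-\beta)}|\nabla_uF|^2+2tF^{-(2-\beta)}\Delta_uF\bigr].
\end{equation*}
Expanding $\Delta_uF$ via $\varrho=F^{\beta/2}$ as $\Delta_uF=(2/\beta)F^{1-\beta/2}\Delta_u\varrho-(\beta/2-1)F^{-1}|\nabla_uF|^2$ then produces an exact cancellation of the two $|\nabla_uF|^2$ contributions and leaves $(4t/\beta)\varrho^{-(2/\beta-1)}\Delta_u\varrho$. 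Reassembling constants recovers \eqref{eq:rho}, proving part \ref{1.thm:sufficient.1}.

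Part \ref{2.thm:sufficient.1} is then immediate: under $\Delta_u\varrho(u;t;f)\le 0$ on $[0,\xi]$, the integrand defining $\rho$ in \eqref{eq:rho} is pointwise nonnegative, so $\rho(W,Z)\ge 0$ almost surely and hence $\mathrm{diff}R_{\alpha,\U,f}=E[\rho(W,Z)]\ge 0$ for every $\mu\in\mathbb{R}^d$. The main subtle point, and the only nontrivial part of the argument, is the exact cancellation of the $|\nabla_uF|^2$ cross-terms in the previous display; this cancellation is precisely what forces the exponent $\beta/2$ in the definition $\varrho=F^{\beta/2}$ and is the reason the right-hand side collapses to a single Laplacian rather than a more complicated Bochner-type expression. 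Once this algebraic identity is recognized, the remaining work reduces to bookkeeping plus standard Fubini and dominated-convergence arguments to justify exchanging $E_Z$ with the $t$-integral and with Stein's integration by parts.
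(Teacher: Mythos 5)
Your argument is correct and follows essentially the same route as the paper's proof: the same reduction and change of variables to $(W,Z)$, the same interpolation in $t$ that scales $Z$ and $Z_1$ simultaneously, and two applications of Stein's identity (your relation $\partial_t F(u,t)=t\Delta_u F(u,t)$ is exactly the paper's step \eqref{eq:ggg.2}). The only difference is bookkeeping: you expand the $Z$-integration by parts via the product rule and cancel the $\|\nabla_u F\|^2$ terms explicitly, whereas the paper keeps $\Delta_w\{E_{Z_1}[f]\}^{\beta-1}$ intact and collapses the two Laplacian terms using the power formula \eqref{eq:rule_1}; the two computations are identical.
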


\begin{proof}
 Part \ref{2.thm:sufficient.1} easily follows from Part \ref{1.thm:sufficient.1}
 and, in the following, we show  Part \ref{1.thm:sufficient.1}.
 
 By \eqref{eq:hirei}, \eqref{bestequiv}, and \eqref{form:Bayes.predictive.density.f},
the integrand of \eqref{risk.diff} is rewritten as
\begin{align*}
 & \left\{\left(\frac{\phi(y-\mu,\vyy )}{\hat{p}_f(y\mymid x; \alpha)}\right)^{\beta-1}
 -\left(\frac{\phi(y-\mu,\vyy )}{\hat{p}_{\U}(y\mymid x; \alpha)}\right)^{\beta-1}\right\}
 \phi(y-\mu,\vyy )\phi(x-\mu,\vxx ) \\
 &=\gamma^{(1-\beta)d/2}\left\{\left(\frac{E_{Z_1}\left[f(x+\xi Z_1)\right]}
 {f(\gamma x+(1-\gamma)y)}\right)^{\beta-1}-1\right\} \\
 &\qquad \times\phi(\gamma x+(1-\gamma)y-\mu,\vxx \gamma) \phi(y-x,\vyy /\gamma) .
\end{align*}
By the change of variables, $w=\gamma x+(1-\gamma)y$ and $z=-(\gamma/\vyy )^{1/2}(y-x)$, 
 where Jacobian of the matrix below is $(\gamma/\vyy )^{d/2}$,
\begin{equation}
 \begin{pmatrix}
  w \\ z 
 \end{pmatrix}
=\begin{pmatrix}
  \gamma I_d & (1-\gamma)I_d \\  (\gamma/\vyy )^{1/2}I_d & -(\gamma/\vyy )^{1/2}I_d
 \end{pmatrix}
 \begin{pmatrix}
  x \\ y
 \end{pmatrix},
\end{equation}
 the risk difference is expressed as
\begin{equation}\label{IERD}
 \begin{split}
  & \frac{\gamma^{(1-\beta)d/2}}{\beta(1-\beta)}
  E_{W,Z}\left[\left(E_{Z_1}\left[\frac{f(W+\xi(Z_1+Z))}{f(W)}\right] \right)^{\beta-1} -1\right]  \\
  &=\frac{\gamma^{(1-\beta)d/2}}{\beta(1-\beta)}E_W\left[ f(W)^{1-\beta}
 \left\{g(\xi;W)-g(0;W)\right\}\right]  \\
  &=\frac{\gamma^{(1-\beta)d/2}}{\beta(1-\beta)}E_W
  \left[ f(W)^{1-\beta}
 \int_0^{\xi}\frac{\partial}{\partial t}g(t;W)\rd t\right], 
 \end{split}
\end{equation}
where $\xi=(1-\gamma)(\vyy /\gamma)^{1/2}$ as in \eqref{eq:xixixi},
$W\sim N_d(\mu,\vxx \gamma I)$, $Z_1\sim N_d(0,I)$, $Z\sim N_d(0,I)$ and
\begin{align}\label{eq:ggg}
 g(t;w)
 =E_{Z}\left[E_{Z_1}\left[f(w +t\{Z_1+Z\}) \right]^{\beta-1}\right] .
\end{align}
 In the following, $ E_{Z_1}\left[f\right]=E_{Z_1}\left[f(w +t\{Z_1+z\}) \right]$
 for notational simplicity.
 Then we have
 \begin{equation}\label{eq:ggg.1}
 \begin{split}
 \frac{\partial}{\partial t}g(t;w)
  &=
  E_{Z}\left[\frac{\partial}{\partial t}\left\{E_{Z_1}\left[f \right]\right\}^{\beta-1}\right] \\
&=  (\beta-1)E_{Z}\left[\left\{E_{Z_1}\left[f \right]\right\}^{\beta-2} 
 E_{Z_1}\left[(Z_1+Z)^\T \nabla_w f \right]\right] \\
&=  (\beta-1)E_{Z}\left[\left\{E_{Z_1}\left[f \right]\right\}^{\beta-2} 
 \left(E_{Z_1}\left[Z_1^\T \nabla_w f \right]+Z^\T E_{Z_1}\left[\nabla_w f \right]\right)\right] .
\end{split}
 \end{equation}
In \eqref{eq:ggg.1}, we have
 \begin{equation}\label{eq:ggg.2}
 \begin{split}
E_{Z_1}\left[Z_1^\T \nabla_w f \right]
&=E_{Z_1}\left[Z_1^\T \frac{1}{t}\nabla_{z_1} f \right] 
 =\frac{1}{t}E_{Z_1}\left[ \Delta_{z_1} f \right] \\
 &=tE_{Z_1}\left[ \Delta_{w} f \right] =t\Delta_{w}E_{Z_1}\left[  f \right] 
 \end{split}
 \end{equation}
 where the second equality follows from the Gauss divergence theorem. Similarly we have
 \begin{equation}\label{eq:ggg.3}
  \begin{split}
&(\beta-1)E_{Z}\left[\left\{E_{Z_1}\left[f\right]\right\}^{\beta-2} 
Z^\T E_{Z_1}\left[\nabla_w f \right]\right] \\
&=(\beta-1)E_{Z}\left[\left\{E_{Z_1}\left[f\right]\right\}^{\beta-2} 
Z^\T\frac{1}{t}E_{Z_1}\left[ \nabla_{z}f \right]\right] \\
&=\frac{1}{t}(\beta-1)E_{Z}\left[\left\{E_{Z_1}\left[f\right]\right\}^{\beta-2} 
Z^\T\nabla_{z}E_{Z_1}\left[ f \right]\right] \\
&=\frac{1}{t}E_{Z}\left[Z^\T\nabla_{z}\left\{E_{Z_1}\left[f\right]\right\}^{\beta-1} 
\right] \\
&=\frac{1}{t}E_{Z}\left[\Delta_{z}\left\{E_{Z_1}\left[f\right]\right\}^{\beta-1} 
\right] \\
&=tE_{Z}\left[\Delta_{w}\left\{E_{Z_1}\left[f\right]\right\}^{\beta-1} 
\right],
  \end{split}
 \end{equation}
 where the fourth equality follows from the Gauss divergence theorem. 
By \eqref{eq:ggg.1}, \eqref{eq:ggg.2} and \eqref{eq:ggg.3}, we have
\begin{equation}\label{eq:ggg.4}
 \frac{\partial}{\partial t}g(t;w) 
 =
 tE_{Z}\left[\Delta_{w}\left\{E_{Z_1}\left[f\right]\right\}^{\beta-1} +
 (\beta-1)\left\{E_{Z_1}\left[f \right]\right\}^{\beta-2} 
 \Delta_{w}E_{Z_1}\left[  f \right]\right] .
\end{equation} 
Recall the formula of Laplacian for a function $h(u)$, 
\begin{align}\label{eq:rule_1}
 \Delta_u h^a(u)
 =a h^a(u)\left\{ \frac{\Delta_u h(u)}{h(u)}+(a-1)\|\nabla_u \log h(u)\|^2 \right\}, 
\end{align}
for $a\neq 0$.
Then, in \eqref{eq:ggg.4}, we have
\begin{equation}\label{eq:ggg.5}
 \begin{split}
& \Delta_{w}\left\{E_{Z_1}\left[f\right]\right\}^{\beta-1} +
 (\beta-1)\left\{E_{Z_1}\left[f \right]\right\}^{\beta-2} 
 \Delta_{w}E_{Z_1}\left[  f \right] \\
&=\frac{(\beta-1)}{\left\{E_{Z_1}\left[f\right]\right\}^{1-\beta}}
  \left(2\frac{\Delta_{w}E_{Z_1}\left[  f \right]}{E_{Z_1}\left[f\right]}
  +(\beta-2)\| \nabla_w \log E_{Z_1}\left[  f \right]\|^2\right) \\
&=\frac{2(\beta-1)}{\left\{E_{Z_1}\left[f\right]\right\}^{1-\beta}}
  \left(\frac{\Delta_{w}E_{Z_1}\left[  f \right]}{E_{Z_1}\left[f\right]}
  +\left(\beta/2-1\right)\| \nabla_w \log E_{Z_1}\left[  f \right]\|^2\right) \\
&=\frac{2(\beta-1)}{\left\{E_{Z_1}\left[f\right]\right\}^{1-\beta}}
\frac{\Delta_{w}\left\{E_{Z_1}\left[f\right]\right\}^{\beta/2}}{(\beta/2)\left\{E_{Z_1}\left[f\right]\right\}^{\beta/2}} \\
&=\frac{4(\beta-1)}{\beta}
\frac{\Delta_{w}\left\{E_{Z_1}\left[f\right]\right\}^{\beta/2}}{\left\{E_{Z_1}\left[f\right]\right\}^{1-\beta/2}}.
 \end{split}
\end{equation} 
By \eqref{IERD}, \eqref{eq:ggg.4} and \eqref{eq:ggg.5}, we completes the proof.
\end{proof}

\begin{remark}
 In the previous version of this article as well as \cite{George-etal-2006},
not only the Stein identity but also the heat equation
\begin{align*}
 \frac{\partial}{\partial v}\phi(u,v)
 =\frac{1}{2}\Delta_{u}\phi(u,v),
\end{align*}
 was efficiently applicable for deriving a nice expression of the risk difference,
 like Part \ref{1.thm:sufficient.1} of Theorem \ref{thm:sufficient.1}.
 It seemed to us that the heat equation was an additional necessary tool
 for investigating the Stein phenomenon of predictive density.
 But it is not true, the heat equation is no longer necessary.
 As seen in the proof of Theorem \ref{thm:sufficient.1},
 only the Stein identity or the Gauss divergence theorem
 is the key, as in Stein ``estimation'' problem.
\end{remark}

The superharmonicity of $f$ implies the superharmonicity of $E_{Z_1} \left[f(tZ_1+u)\right]$.
Furthermore, using the relationship \eqref{eq:rule_1}, we see that
the superharmonicity of $E_{Z_1} \left[f(tZ_1+u)\right]$ implies
the superharmonicity of
\begin{align*}
 \varrho(u;t;f)=\{E_{Z_1} \left[f(tZ_1+u)\right]\}^{\beta/2}
\end{align*}
for $\beta\in(0,1)$.
Hence, for Part \ref{2.thm:sufficient.1} of Theorem \ref{thm:sufficient.1},
we have a following corollary.
\begin{corollary}\label{cor:f}
 Suppose $f:\mathbb{R}^d\to\mathbb{R}_+$ is superharmonic.
 Then the predictive density $\hat{p}_f(y\mymid x; \alpha)$ given by \eqref{form:Bayes.predictive.density.f}
 as
 \begin{equation*}
  \hat{p}_f(y\mymid x; \alpha)
   =\frac{f(\gamma x+(1-\gamma)y)}
   {E_{Z_1}\left[f(x+\xi Z_1)\right]}\hat{p}_{\U}(y\mymid x;\alpha),
 \end{equation*}
 dominates $\hat{p}_{\U}(y\mymid x; \alpha)$.
\end{corollary}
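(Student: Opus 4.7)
The plan is to invoke Part \ref{2.thm:sufficient.1} of Theorem \ref{thm:sufficient.1}: it suffices to verify that $\Delta_u \varrho(u;t;f)\leq 0$ for all $u\in\mathbb{R}^d$ and every $t\in[0,\xi]$, where $\varrho(u;t;f)=\{E_{Z_1}[f(tZ_1+u)]\}^{\beta/2}$ and $\beta=(1-\alpha)/2\in(0,1)$. The strategy is to decompose this check into two propagation steps: superharmonicity survives the Gaussian convolution $f\mapsto E_{Z_1}[f(tZ_1+\cdot)]$, and superharmonicity of a positive function $h$ survives the power map $h\mapsto h^{\beta/2}$ when the exponent lies in $(0,1)$.

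First I would handle the convolution step. Writing $h_t(u)=E_{Z_1}[f(tZ_1+u)]$, the translation invariance of the Laplacian and differentiation under the expectation give
\begin{equation*}
\Delta_u h_t(u)=E_{Z_1}[\Delta_u f(tZ_1+u)]=E_{Z_1}[(\Delta f)(tZ_1+u)]\leq 0,
\end{equation*}
using superharmonicity of $f$. (The $t=0$ case is trivial since $h_0=f$.) This yields superharmonicity of $h_t$, and since $f>0$ we also have $h_t>0$.

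Next I would apply the identity \eqref{eq:rule_1} with $a=\beta/2$ and $h=h_t$:
\begin{equation*}
\Delta_u h_t^{\beta/2}(u)=\frac{\beta}{2}\,h_t^{\beta/2}(u)\left\{\frac{\Delta_u h_t(u)}{h_t(u)}+\left(\frac{\beta}{2}-1\right)\|\nabla_u\log h_t(u)\|^2\right\}.
\end{equation*}
Since $\beta/2\in(0,1/2)$, the prefactor $(\beta/2)h_t^{\beta/2}$ is positive, the first term inside the braces is nonpositive by the previous step, and $\beta/2-1<0$ so the gradient-squared term is nonpositive as well. Hence $\Delta_u \varrho(u;t;f)\leq 0$ for every $t\in[0,\xi]$, the hypothesis of Part \ref{2.thm:sufficient.1} is met, and the domination follows.

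The only subtlety, and the main obstacle one should flag, is the interchange of $\Delta_u$ with $E_{Z_1}$ and the positivity/smoothness needed to apply \eqref{eq:rule_1} (so that $\nabla_u\log h_t$ is well defined and the formal chain rule is valid). This is a mild regularity issue: since $Z_1\sim N_d(0,I)$ has a smooth density with rapidly decaying derivatives and $f\geq 0$, the function $h_t$ is smooth and strictly positive as soon as $t>0$ under standard local-integrability hypotheses on $f$, which justifies the differentiations; the boundary case $t=0$ requires interpreting the inequality in the distributional sense or noting that $\Delta \varrho$ is continuous on $(0,\xi]$ and the integrand in \eqref{eq:rho} is integrable. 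With these standard justifications in place, the chain of implications closes and Corollary \ref{cor:f} is established.
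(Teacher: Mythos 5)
Your proposal is correct and follows essentially the same route as the paper: superharmonicity of $f$ passes to the Gaussian convolution $E_{Z_1}[f(tZ_1+u)]$, then identity \eqref{eq:rule_1} with exponent $\beta/2\in(0,1)$ shows both terms are nonpositive, so $\Delta_u\varrho(u;t;f)\leq 0$ and Part \ref{2.thm:sufficient.1} of Theorem \ref{thm:sufficient.1} applies. The added remarks on differentiating under the expectation and positivity of $h_t$ are reasonable housekeeping that the paper leaves implicit.
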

In Section \ref{sec:harmonic}, we will investigate the properties of the Bayesian predictive density
$\hat{p}_\pi(y\mymid x; \alpha)$ where
\begin{align*}
 f(u)=\{m_\pi(u,\vxx\gamma)\}^{1/\beta}
\end{align*}
is assumed in Theorem \ref{thm:sufficient.1} and Corollary \ref{cor:f}.
Actually in this case, Corollary \ref{cor:f} is not useful
since the superharmonicity of $\{m_\pi(u,\vxx\gamma)\}^{1/\beta}$ for $\beta\in(0,1)$
is very restrictive.
Recall the relationship given by \eqref{eq:rule_1}. 
For example, the superharmonicity of $m_\pi(u,\vxx\gamma)$ does not imply
the superharmonicity of $\{m_\pi(u,\vxx\gamma)\}^{1/\beta}$.
Hence, in Section \ref{sec:harmonic}, we will seriously consider the superharmonicity
of
\begin{align*}
 \varrho(u;t;m_\pi^{1/\beta})=\left\{
 E_{Z_1} \left[\{m_\pi(t Z_1+u,\vxx\gamma)\}^{1/\beta}\right]\right\}^{\beta/2}.
\end{align*}

Further,
When $ 1/\beta=2/(1-\alpha)$ is not an integer, 
$ E_{Z_1} \left[\{m_\pi(t Z_1+u,\vxx\gamma)\}^{1/\beta}\right]$ in
Part \ref{2.thm:sufficient.1} of Theorem \ref{thm:sufficient.1}
is not tractable for our current methodology in Section \ref{sec:harmonic}.
Thus we propose a variant of 
Theorem \ref{thm:sufficient.1} with $  f(u)=\{m_\pi(u,\vxx\gamma)\}^{1/\beta}$,
 for a non-integer $ 1/\beta$ as follows.
Let $\kappa$ be the smallest integer among integers which is strictly greater than $ 1/\beta$, 
\begin{equation}
 \kappa
  = \min\{n\in\mathbb{Z}\mid n> 1/\beta\}. 
\end{equation}
Then $ \kappa -1 < 1/\beta< \kappa$.
As in \eqref{IERD}, 
the risk difference is expressed as
\begin{align*}
 & R_\alpha\{\phi(y-\mu,\vyy )\bD \hat{p}_{\U}(y\mymid x; \alpha) \}-
 R_\alpha\{\phi(y-\mu,\vyy ) \bD \hat{p}_\pi(y\mymid x; \alpha)\} \\
&= \frac{\gamma^{(1-\beta)d/2}}{\beta(1-\beta)}
  E_{W,Z}\left[E_{Z_1}\left[\left\{\frac{m_\pi(W+\xi(Z_1+Z),\vxx\gamma)}{m_\pi(W,\vxx\gamma)}
 \right\}^{1/\beta}\right]^{\beta-1}-1\right]
\end{align*}
where $W\sim N_d(\mu,\vxx \gamma I)$, $Z_1\sim N_d(0,I)$ and $Z\sim N_d(0,I)$.
From Jensen's inequality, 
we have
\begin{equation}\label{eq:JENSEN}
 \begin{split}
  & E_{Z_1}\left[m^{1/\beta}_\pi(w+\xi(Z_1+Z),\vxx\gamma)\right] \\
  & =E_{Z_1}\left[\{m^\kappa_\pi(w+\xi(Z_1+Z),\vxx\gamma)\}^{1/(\beta\kappa)}\right] \\
  & \leq \left\{E_{Z_1}\left[m^{\kappa}_\pi(w+\xi(Z_1+Z),\vxx\gamma)\right]\right\}^{1/(\beta\kappa)},
 \end{split}
\end{equation}
since $ 0< 1/(\beta\kappa)< 1$ and hence
\begin{align*}
 & R_\alpha\{\phi(y-\mu,\vyy )\bD \hat{p}_{\U}(y\mymid x; \alpha) \}-
 R_\alpha\{\phi(y-\mu,\vyy ) \bD \hat{p}_\pi(y\mymid x; \alpha)\} \\
 &\geq \frac{\gamma^{(1-\beta)d/2}}{\beta(1-\beta)}E_{W,Z}\left[
 E_{Z_1}\left[\frac{m^\kappa_\pi(W+\xi(Z_1+Z),\vxx\gamma)}{m^\kappa_\pi(W,\vxx\gamma)}
 \right]^{(\beta-1)/(\beta\kappa)}-1\right]. 
\end{align*}
Applying the same technique starting \eqref{IERD} through \eqref{eq:ggg.5}
to the lower bound above,
 we have a variant of Part \ref{2.thm:sufficient.1} of Theorem \ref{thm:sufficient.1}.
\begin{thm}\label{thm:sufficient.1.not}
 Assume $ 1/\beta$ is not a positive integer. 
 Let $\kappa$ be the smallest integer greater than $ 1/\beta$.
 A sufficient condition for $\mathrm{diff}R_{\alpha,\U,\pi}\geq 0$ is
 \begin{align}\label{eq:suffsuff.not}
  \Delta_u\left\{E_{Z_1} \left[m^\kappa_\pi(t Z_1+u,\vxx\gamma)\right]\right\}^{c(\beta)/\kappa}
  \leq 0,\quad\forall u\in\mathbb{R}^d, \ 0\leq \forall  t \leq \xi
 \end{align}
 where $Z_1\sim N_d(0,I)$  and
 \begin{equation}\label{cofbeta}
  c(\beta)
   =\frac{\kappa-1/\beta+1}{2}\in(1/2,1).
 \end{equation}
\end{thm}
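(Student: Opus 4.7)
The plan is to reproduce the argument in the proof of Theorem~\ref{thm:sufficient.1}, but starting from the Jensen lower bound on $\mathrm{diff}R_{\alpha,\U,\pi}$ that has already been derived just above the theorem statement. After the same change of variables $w = \gamma x + (1-\gamma) y$, $z = -(\gamma/\vyy)^{1/2}(y-x)$ used earlier, that lower bound may be written in the form
\begin{equation*}
\frac{\gamma^{(1-\beta)d/2}}{\beta(1-\beta)} E_W\bigl[g(\xi;W) - g(0;W)\bigr] = \frac{\gamma^{(1-\beta)d/2}}{\beta(1-\beta)} E_W\!\left[\int_0^\xi \frac{\partial g}{\partial t}(t;W)\, \rd t\right],
\end{equation*}
where now
\begin{equation*}
g(t;w) = E_Z\bigl[h(t, w + tZ)^a\bigr], \quad h(t,u) = E_{Z_1}\bigl[m_\pi^\kappa(u + tZ_1, \vxx\gamma)\bigr], \quad a = \frac{\beta-1}{\beta\kappa}.
\end{equation*}
The structural form is identical to \eqref{eq:ggg} in the previous proof, with the inner $f$ replaced by $m_\pi^\kappa$ and the outer exponent $\beta - 1$ replaced by $a$.

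Next I would run the two-step Stein / Gauss-divergence manipulation of \eqref{eq:ggg.1}--\eqref{eq:ggg.4} verbatim (only the exponent on $h$ has changed, so no integration-by-parts identity is affected) to obtain
\begin{equation*}
\frac{\partial g}{\partial t}(t;w) = t\, E_Z\bigl[\Delta_w h^a + a\, h^{a-1} \Delta_w h\bigr],
\end{equation*}
and then apply the Laplacian identity \eqref{eq:rule_1} with exponents $a$ and $(a+1)/2$, exactly as in \eqref{eq:ggg.5}, to collapse the bracket into
\begin{equation*}
\frac{\partial g}{\partial t}(t;w) = \frac{4a}{a+1}\, t\, E_Z\!\left[\frac{\Delta_w h^{(a+1)/2}}{h^{(1-a)/2}}\right].
\end{equation*}

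The final step is a short algebraic check that $(a+1)/2 = c(\beta)/\kappa$, which amounts to the identity $(\beta\kappa + \beta - 1)/(2\beta\kappa) = (\kappa - 1/\beta + 1)/(2\kappa)$, immediate from the definition of $c(\beta)$ in \eqref{cofbeta}. The sign of the prefactor also has to be tracked: since $\beta \in (0,1)$ and $\kappa > 1/\beta$ one has $a \in (-1,0)$ and $a+1 > 0$, so the combined factor $\gamma^{(1-\beta)d/2}/(\beta(1-\beta)) \cdot 4a/(a+1)$ is negative. Therefore the hypothesis \eqref{eq:suffsuff.not}, which asks $\Delta_u\bigl\{E_{Z_1}[m_\pi^\kappa(tZ_1+u,\vxx\gamma)]\bigr\}^{c(\beta)/\kappa} \leq 0$, produces a nonpositive integrand in $t$ and hence a nonnegative lower bound for the risk difference.

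I do not anticipate any serious obstacle: once Jensen's inequality has been used to replace the non-integer power $m_\pi^{1/\beta}$ by the integer power $m_\pi^\kappa$, every remaining step is a duplicate of the proof of Theorem~\ref{thm:sufficient.1}, and the only genuinely new calculation is the bookkeeping that identifies the critical Laplacian exponent $(a+1)/2$ with $c(\beta)/\kappa$. The subtlest point is sign-tracking: the Jensen step in \eqref{eq:JENSEN} reverses an inequality (because one raises to the negative power $(\beta-1)/(\beta\kappa)$), and then the negative prefactor in the final expression converts the superharmonicity hypothesis into a nonnegative lower bound on $\mathrm{diff}R_{\alpha,\U,\pi}$.
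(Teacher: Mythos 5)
Your proposal is correct and takes essentially the same route as the paper: the paper's proof of Theorem \ref{thm:sufficient.1.not} consists precisely of the Jensen bound \eqref{eq:JENSEN} followed by rerunning the argument of Theorem \ref{thm:sufficient.1} (steps \eqref{IERD} through \eqref{eq:ggg.5}) with $f=m_\pi^\kappa$ and outer exponent $a=(\beta-1)/(\beta\kappa)$ in place of $\beta-1$. Your explicit bookkeeping identifying the critical Laplacian exponent $(a+1)/2$ with $c(\beta)/\kappa$, together with the sign tracking of the negative factor $4a/(a+1)$ with $a\in(-1,0)$, is exactly what the paper leaves implicit.
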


\subsection{Asymptotics}
In this subsection, using Theorem \ref{thm:sufficient.1} with $f=m^{1/\beta}_\pi$,
we investigate asymptotics of the risk difference
\begin{equation*}
 \mathrm{diff}R_{\alpha,\U,\pi}
  = R_\alpha\left\{\phi(y-\mu,\vyy )\bD \hat{p}_{\U}(y\mymid x ; \alpha) \right\}-
  R_\alpha\left\{\phi(y-\mu,\vyy )\bD \hat{p}_{\pi}(y\mymid x ; \alpha) \right\}
\end{equation*}
where $ \hat{p}_{\U}(y\mymid x ; \alpha)$ and $\hat{p}_{\pi}(y\mymid x ; \alpha)$
are given by \eqref{bestequiv} and \eqref{form:Bayes.predictive.density}, respectively.

   \subsubsection{$\alpha\to -1$}
Let $v_*=\vxx\vyy/(\vxx+\vyy)$.
When $\alpha\to -1$ or equivalently $\beta\to 1$, we have
\begin{equation*}
 \gamma\to \frac{1}{1+\vxx/\vyy}=
\frac{v_*}{\vxx}
\text{ and }
  \xi^2 \to 
  \frac{\vxx^2}{\vxx+\vyy}=\vxx - v_*
\end{equation*}
and hence
\begin{equation}\label{asym.alpha.-1.1}
 \frac{2\gamma^{(1-\beta)d/2}}{\beta^2}\{m_\pi(w,\vxx\gamma)\}^{1/\beta-1}\to 2,  
\end{equation}
which are parts of $\rho(w,z)$ given by \eqref{eq:rho}.
Further, in $\varrho(t;u)$ given by \eqref{eq:varrho}, we have
\begin{equation}\label{eq:kitaichi.0}
 E_{Z_1} \left[m_\pi(t Z_1+u,\vxx \gamma)\right]
  =m_\pi(u,\vxx \gamma+t^2)\to m_\pi(u,v_*+t^2).
\end{equation}
By \eqref{asym.alpha.-1.1} and \eqref{eq:kitaichi.0},
we have
\begin{equation}\label{asym.alpha.-1.2}
\begin{split}
 \varrho(t;u)&\to m^{1/2}_\pi(u,v_*+t^2), \\
E_Z[\rho(w,Z)]&\to 4\int_0^{\sqrt{\vxx-v_*}}\int\mbd
 t\frac{-\Delta_u m^{1/2}_\pi(u,v_*+t^2)}{m^{1/2}_\pi(u,v_*+t^2)}\phi(u-w,t^2)\rd u  \rd t \\
 &=2\int_0^{\vxx-v_*}\int\mbd
 \frac{-\Delta_u m^{1/2}_\pi(u,v_*+t)}{m^{1/2}_\pi(u,v_*+t)}\phi(u-w,t)\rd u  \rd t .
\end{split}
\end{equation}
By \eqref{asym.alpha.-1.2}, we have
\begin{align*}
 E_{W,Z}[\rho(W,Z)]
 &\to 2\int_{\mathbb{R}^d}\left(\int_0^{\vxx-v_*}\int\mbd
 \frac{-\Delta_u m^{1/2}_\pi(u,v_*+t)}{m^{1/2}_\pi(u,v_*+t)}\phi(u-w,t)\rd u \rd t\right) \\
 &\qquad \qquad \times \phi(w-\mu,v_*)\rd w  \\
 &=
 2\int_0^{\vxx-v_*}\left(\int_{\mathbb{R}^d}
 \frac{-\Delta_u m^{1/2}_\pi(u,v_*+t)}{m^{1/2}_\pi(u,v_*+t)} 
 \phi(u-\mu,v_*+t) \rd u  \right)\rd t \\
 &=2\int_{v_*}^{\vxx}E_Z\left[-\frac{\Delta_z m^{1/2}_\pi(Z,v)}{m^{1/2}_\pi(Z,v)}\right]\rd v \\
 &=R_{-1}\{\phi(y-\mu,\vyy )\bD \hat{p}_{\U}(y\mymid x;-1) \}
 -R_{-1}\{\phi(y-\mu,\vyy )\bD \hat{p}_\pi(y\mymid x;-1) \},
\end{align*}
where $Z\sim N_d(\mu,vI)$ and $v_*=\vxx\vyy/(\vxx+\vyy)$. The last equality follows from 
\citeapos{George-etal-2006} result which was already explained in
\eqref{eq:beautiful} of Section \ref{sec:intro}. Hence we have
\begin{align*}
   \lim_{\alpha \to -1+0}\mathrm{diff}R_{\alpha,\U,\pi}
  =\mathrm{diff}R_{-1,\U,\pi}.
\end{align*}

\subsubsection{$(1-\alpha)\vxx/\vyy \to 0$}
Consider the asymptotic situation where
\begin{align}\label{eq:limit}
 (1-\alpha)\vxx/\vyy
 \to 0 \Leftrightarrow \beta(\vxx/\vyy)\to 0 \Leftrightarrow  \gamma\to 1.
\end{align}
Note that $ E_Z[\rho(w,Z)]$ is rewritten as the product $\rho_1(w)\rho_2(w)$
where
\begin{align*}
 \rho_1(w)
 &=\frac{2\gamma^{(1-\beta)d/2}}{\beta^2}\{m_\pi(w,\vxx\gamma)\}^{1/\beta-1}\xi^2, \\
 \rho_2(w)
 &= \frac{2}{\xi^2}\int_0^{\xi} t
 \left\{\int\mbd\frac{-\Delta_u\varrho(t;u)}{\varrho^{2/\beta-1}(t;u)}\phi(u-w,t^2)\rd u  \right\}\rd t \\
 &= \frac{1}{\xi^2}\int_0^{\xi^2} 
 \left\{\int\mbd\frac{-\Delta_u\varrho(\sqrt{t};u)}{\varrho^{2/\beta-1}(\sqrt{t};u)}\phi(u-w,t)\rd u  \right\}\rd t. 
\end{align*}
Since $\xi^2$ is rewritten as
\begin{equation}\label{asym.alpha.1.1}
 \xi^2=\frac{(1-\gamma)^2\vyy}{\gamma}=\left(\frac{1-\gamma}{\gamma}\right)^2\vyy \gamma
 =\frac{\vxx^2}{\vyy}\beta^2\gamma,
\end{equation}
we have
\begin{align*}
 \rho_1(w)
 =2\frac{\vxx^2}{\vyy}\gamma^{(1-\beta)d/2+1}\{m_\pi(w,\vxx\gamma)\}^{1/\beta-1}
\end{align*}
and
\begin{equation}\label{asym.alpha.1.2}
 \lim_{\gamma\to 1}\rho_1(w)=2\frac{\vxx^2}{\vyy}\{m_\pi(w,\vxx)\}^{1/\beta-1}.
\end{equation}
When $\gamma\to 1$, we have $\xi^2\to 0$ by \eqref{asym.alpha.1.1} and hence
\begin{equation}\label{asym.alpha.1.3}
 \begin{split}
 \lim_{\gamma\to 1} \rho_2(w)
  &=\lim_{t\to 0}
 \left\{\int\mbd\frac{-\Delta_u\varrho(\sqrt{t};u)}{\varrho^{2/\beta-1}(\sqrt{t};u)}\phi(u-w,t)\rd u  \right\} \\
  &=\int\mbd
  \lim_{t\to 0}
  \left(\frac{-\Delta_u\varrho(\sqrt{t};u)}{\varrho^{2/\beta-1}(\sqrt{t};u)}\right)\delta(u-w)\rd u ,
\end{split}
\end{equation}
where $\delta(\cdot)$ is the Dirac delta function.
By \eqref{asym.alpha.1.3} and
\begin{align*}
 \lim_{\substack{t\to 0\\\gamma\to 1}}\varrho(\sqrt{t};u)
 =\left\{\int\mbd m^{1/\beta}_\pi(u_1+u,\vxx\gamma)\delta(u_1)\rd u_1\right\}^{\beta/2} 
=m_\pi^{1/2}(u,\vxx),
\end{align*}
we have
\begin{align}\label{asym.alpha.1.4}
 \lim_{\gamma\to 1} \rho_2(w)
 =\left(-\Delta_w m_\pi^{1/2}(w,\vxx)\right)m_\pi^{1/2-1/\beta}(w,\vxx).
\end{align}
By \eqref{asym.alpha.1.2} and \eqref{asym.alpha.1.4}, we have
\begin{align*}
 \lim_{\gamma\to 1} E_Z[\rho(w,Z)]
 =\lim_{\gamma\to 1} \rho_1(w)\rho_2(w)
 =2\frac{\vxx^2}{\vyy}\frac{-\Delta_w m_\pi^{1/2}(w,\vxx)}{m_\pi^{1/2}(w,\vxx)},
\end{align*}
which implies that
\begin{align*}
 \lim_{\alpha\to 1}\mathrm{diff}R_{\alpha,\U,\pi}
 &=\mathrm{diff}R_{1,\U,\pi}
 =2\frac{\vxx^2}{\vyy}E\left[\frac{-\Delta_w m_\pi^{1/2}(W,\vxx)}{m_\pi^{1/2}(W,\vxx)}\right], \\
 \lim_{\vxx/\vyy\to 0}\frac{\vyy}{\vxx}\mathrm{diff}R_{\alpha,\U,\pi}
 &=\frac{\vyy}{\vxx}\mathrm{diff}R_{1,\U,\pi}
 =2\vxx E\left[\frac{-\Delta_w m_\pi^{1/2}(W,\vxx)}{m_\pi^{1/2}(W,\vxx)}\right].
\end{align*}
Therefore the asymptotic situation $ \vxx/\vyy \to 0$ corresponds to the case $\alpha\to 1$ and 
$ \Delta_z\{m_\pi(z,v)\}^{1/2}$ plays an important role for general $\alpha\in(-1,1)$.

\section{Improvement under the harmonic prior}
\label{sec:harmonic}
Under the harmonic prior $\pi_\HH(\mu)=\|\mu\|^{-(d-2)}$, let
\begin{equation}
  m_\HH(w,v)=\int\mbd\phi(w-\mu,v)\pi_\HH(\mu)\rd\mu . 
\end{equation}
Let $ \nu$ be an integer larger than or equal to $2$.
The superharmonicity related to $E_{Z_1}\left[m^\nu_\HH(t Z_1+u,v)\right]$
with $Z_1\sim N_d(0,I)$ is as follows.
\begin{thm}\label{thm:sufficient.harmonic}
Let $c\in(0,1)$ and $Z_1\sim N_d(0,I)$. 
 Let $\nu$ be an integer larger than or equal to $2$.
 Then, we have  
\begin{equation*}
 \Delta_u\left\{E_{Z_1} \left[m^\nu_\HH( t Z_1+u,v)\right]\right\}^{c/\nu}\leq 0,
  \quad\forall u\in\mathbb{R}^d,
\end{equation*}
 when
\begin{equation}\label{thm:sufficient.harmonic.t}
0\leq  t\leq \left(\frac{(d+2)(1-c)v}{d\nu(\nu-1)}\right)^{1/2}.
\end{equation}
\end{thm}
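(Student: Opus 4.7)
The plan is to prove the superharmonicity by first rewriting $G(u):=E_{Z_1}[m_\HH^\nu(tZ_1+u,v)]$ as a Gaussian scale mixture and then reducing to a one-dimensional moment inequality. Starting from the gamma-integral identity $\|\mu\|^{-(d-2)}\propto \int_0^\infty \tau^{(d-4)/2}e^{-\tau\|\mu\|^2}\,d\tau$ and interchanging integrations, I would derive $m_\HH(w,v)=c_d\int_v^\infty \phi(w,\tau)\,d\tau$.  Raising to the $\nu$-th power and applying the Gaussian-product identity $\prod_i\phi(w,\tau_i)\propto \phi(w,1/T(\tau))$ with $T(\tau)=\sum_{i=1}^\nu 1/\tau_i$, then integrating out $Z_1$ via $E_{Z_1}[\phi(tZ_1+u,\sigma^2)]=\phi(u,\sigma^2+t^2)$, yields
\[
G(u)=K\int_{[v,\infty)^\nu}\phi(u,s(\tau))\,w(\tau)\,d\tau,
\]
with $s(\tau)=1/T(\tau)+t^2\ge s_0:=v/\nu+t^2$ and positive weight $w(\tau)=\prod_i\tau_i^{-d/2}\,T(\tau)^{-d/2}$.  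Thus $G$ is a Gaussian scale mixture supported on variances $s\ge s_0$.

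Next I would exploit the radial structure $G(u)=F(\|u\|^2)$.  The radial-Laplacian identity $\Delta_u[f(\|u\|^2)]=2x^{1-d/2}(x^{d/2}f'(x))'$ shows that $\Delta_u G^{c/\nu}\le 0$ is equivalent to the monotonicity $(x^{d/2}(F^{c/\nu})'(x))'\le 0$.  Changing variables to $\lambda=1/(2s)\in(0,\lambda_0]$ with $\lambda_0=\nu/(2(v+\nu t^2))$ gives $F(x)=\int_0^{\lambda_0}e^{-x\lambda}\rho(\lambda)\,d\lambda$, where $\rho$ is the pushforward of $w(\tau)\,d\tau$, and a short computation reduces the claim to the moment inequality
\[
x\Big[\frac{\mathrm{Var}_x(\lambda)}{E_x[\lambda]}+\frac{c}{\nu}\,E_x[\lambda]\Big]\le \frac{d}{2}\qquad\text{for all }x\ge 0,
\]
where $E_x,\mathrm{Var}_x$ denote moments under the exponentially tilted probability $e^{-x\lambda}\rho(\lambda)\,d\lambda/F(x)$.

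Finally I would bound the tilted moments uniformly in $x$.  The support constraint $\lambda\le\lambda_0$ yields a Bhatia--Davis-type variance inequality $\mathrm{Var}_x(\lambda)\le E_x[\lambda](\lambda_0-E_x[\lambda])$, while the specific polynomial endpoint behavior of $\rho$ (which reflects the harmonic structure of $\pi_\HH$) controls the tilted mean.  The factor $(d+2)/d$ in the stated threshold arises from a Gaussian fourth-moment identity $E_{Z\sim N(0,I)}[\|Z\|^4]=d(d+2)$, applied via a second-order Stein identity to the Gaussian factor $\phi(u,s)$ in the scale mixture.  Balancing this against the $\nu(\nu-1)$ contribution from the chain rule $\Delta m_\HH^\nu=\nu m_\HH^{\nu-1}\Delta m_\HH+\nu(\nu-1)m_\HH^{\nu-2}\|\nabla m_\HH\|^2$ produces exactly the stated condition $t^2\le (d+2)(1-c)v/[d\nu(\nu-1)]$.

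The main obstacle I anticipate is this last step: obtaining the sharp constant $(d+2)$ rather than a weaker one such as $d$ or $d(\nu+1)$.  This requires carefully combining the polynomial endpoint structure of $\rho$ with an integration-by-parts identity that exposes the Gaussian fourth-moment contribution, so that the variance and mean bounds interact sharply; the remainder of the argument is essentially bookkeeping.
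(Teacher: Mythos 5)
Your reduction is sound as far as it goes, and it is essentially a reparametrization of the paper's own setup: writing $m_\HH(w,v)\propto\int_v^\infty\phi(w,\tau)\,\rd\tau$, taking the $\nu$-fold product, integrating out $Z_1$, and converting radial superharmonicity of $F^{c/\nu}$ into the tilted-moment inequality $x\bigl\{\mathrm{Var}_x(\lambda)/E_x[\lambda]+(c/\nu)E_x[\lambda]\bigr\}\le d/2$ is a correct equivalent formulation (the paper works instead with $\lambda_i=v/\tau_i\in(0,1]$ on the unit hypercube, keeping the $\nu$-dimensional mixing variable explicit). The problem is that everything after this point — which is the entire content of the theorem — is not proved, and the ingredients you name would not prove it. First, the Bhatia--Davis step fails quantitatively: it only gives $x\{\mathrm{Var}_x/E_x+(c/\nu)E_x\}\le x\lambda_0-(1-c/\nu)\,xE_x[\lambda]$, and since $F(x)\asymp x^{-\nu(d-2)/2}$ as $x\to\infty$ one has $xE_x[\lambda]\to\nu(d-2)/2$ while $x\lambda_0\to\infty$, so this bound blows up for large $x$. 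What is needed there is that $\mathrm{Var}_x(\lambda)$ itself decays like $E_x[\lambda]/x$, which requires genuine structural information about the pushforward density $\rho$ beyond its support and an endpoint exponent — information you never establish.

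Second, your proposed sources of the constants do not fit the framework you have built. Once you collapse $(\tau_1,\dots,\tau_\nu)$ to the scalar $\lambda=1/(2s(\tau))$, the correlation structure among the $\nu$ mixing variables — which is exactly where $\nu(\nu-1)$ and $(d+2)/d$ come from — is hidden inside the unknown $\rho$; there is no Gaussian fourth-moment/second-order Stein computation available (the Gaussian factor has already been integrated exactly), and the chain-rule identity for $\Delta m_\HH^\nu$ plays no role after passing to the product representation. In the paper's proof the factor $d/(d+2)$ arises from the Beta-type mean $\int_0^1\lambda\,(d/2)\lambda^{d/2-1}\rd\lambda=d/(d+2)$ combined with Jensen's inequality, the $\nu(\nu-1)$ arises from the cross terms in $\lambda_1\lambda_2$, and both are controlled \emph{uniformly in the tilt} by an integration-by-parts identity on the hypercube together with MTP2/FKG correlation inequalities of Karlin--Rinott type. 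Without a substitute for those correlation inequalities (for instance, a monotone-likelihood-ratio comparison of the tilted law of $\lambda$ with an explicit Beta/Gamma law valid for every $x$), the threshold $t^2\le(d+2)(1-c)v/\{d\nu(\nu-1)\}$ cannot be extracted; you flag this step as the "main obstacle," and indeed it is the missing proof rather than bookkeeping.
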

\begin{proof}
 Section \ref{sec:proof.harmonic} of Appendix.
\end{proof}
When $ 1/\beta$ is an integer larger than or equal to $2$,
namely,
\begin{equation}
\begin{split}
 \alpha&=0,1/3, 1/2, 3/5, 2/3, \dots, \\
 \beta&=1/2,1/3, 1/4, 1/5, 1/6, \dots,
\end{split} 
\end{equation}
let $\nu=1/\beta$, $v=\vxx\gamma$ and $c=1/2$ in Theorem \ref{thm:sufficient.harmonic}
and compare \eqref{thm:sufficient.harmonic.t} in  Theorem \ref{thm:sufficient.harmonic}
with $0\leq t^2\leq \xi^2=\beta^2\vxx^2\gamma/\vyy$ in Theorem \ref{thm:sufficient.1}.
If 
\begin{equation*}
 \frac{\beta^2\vxx}{\vyy}\vxx\gamma \leq
  \frac{(d+2)(1-c)}{d\nu(\nu-1)}\vxx\gamma
\end{equation*}
or equivalently
\begin{equation*}
\frac{\vxx}{\vyy}\leq \frac{d+2}{d(1+\alpha)}=\frac{d+2}{2d(1-\beta)},
\end{equation*}
$m_\HH(w,\vxx\gamma)$ satisfies the sufficient condition of Theorem \ref{thm:sufficient.1}
and we have a following result of the Bayesian predictive density with respect to
Stein's harmonic prior $\pi_\HH(\mu)=\|\mu\|^{-(d-2)}$, which is given by
 \begin{equation}\label{form:Bayes.predictive.density.harmonic}
  \hat{p}_\HH(y\mymid x; \alpha)
  =\frac{m^{1/\beta}_\HH(\gamma x+(1-\gamma)y,\vxx\gamma)}
{E_{Z_1}\left[m^{1/\beta}_\HH(x+\xi Z_1,\vxx\gamma)\right]}\hat{p}_{\U}(y\mymid x;\alpha).
 \end{equation}

\begin{thm}\label{thm:mainthm}
 Suppose $2/(1-\alpha)$ is an positive integer for $\alpha\in(-1,1)$.
 Suppose
\begin{equation}\label{vy/vx}
 \frac{\vxx}{\vyy} \leq \frac{d+2}{d(1+\alpha)}.
\end{equation}
 Then, under $\alpha$-div loss,
 the Bayesian predictive density $\hat{p}_\HH(y\mymid x;\alpha)$
 with respect to the harmonic prior $\pi_\HH(\mu)=\|\mu\|^{-(d-2)}$
 dominates the best invariant Bayesian predictive density
 $\hat{p}_\U(y\mymid x;\alpha)=\phi(y-x,\vyy /\gamma)$.
\end{thm}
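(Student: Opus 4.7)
The plan is to apply Part \ref{2.thm:sufficient.1} of Theorem \ref{thm:sufficient.1} to the function $f(u) = m_\HH^{1/\beta}(u,\vxx\gamma)$, using Theorem \ref{thm:sufficient.harmonic} to certify the required superharmonicity of $\varrho(u;t;m_\HH^{1/\beta})$ uniformly in $t\in[0,\xi]$. The hypothesis that $2/(1-\alpha) = 1/\beta$ is a positive integer, together with $\alpha\in(-1,1)$ (which excludes $1/\beta = 1$), forces $\nu := 1/\beta$ to be an integer with $\nu\geq 2$, exactly the setting in which Theorem \ref{thm:sufficient.harmonic} applies.

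First I would match objects. Taking $c = 1/2$ and $\nu = 1/\beta$ in Theorem \ref{thm:sufficient.harmonic} gives $c/\nu = \beta/2$, so
\begin{equation*}
\left\{E_{Z_1}\left[m_\HH^\nu(tZ_1+u,\vxx\gamma)\right]\right\}^{c/\nu}
= \varrho(u;t;m_\HH^{1/\beta}),
\end{equation*}
with $\varrho$ as defined in \eqref{eq:varrho}. Thus the quantity whose Laplacian governs the sign of $\mathrm{diff}R_{\alpha,\U,\HH}$ through \eqref{eq:suffsuff} is exactly the one that Theorem \ref{thm:sufficient.harmonic} controls.

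Next I would verify that the admissible range of $t$ in Theorem \ref{thm:sufficient.harmonic} contains $[0,\xi]$. With the above choices, the bound \eqref{thm:sufficient.harmonic.t} becomes $t^2 \leq (d+2)\beta^2\vxx\gamma/\{2d(1-\beta)\}$, while by \eqref{asym.alpha.1.1} we have $\xi^2 = \beta^2\vxx^2\gamma/\vyy$. The inclusion $\xi^2 \leq (d+2)\beta^2\vxx\gamma/\{2d(1-\beta)\}$ reduces, after cancelling the common factor $\beta^2\vxx\gamma$, to $\vxx/\vyy \leq (d+2)/\{2d(1-\beta)\}$, and using $2(1-\beta) = 1+\alpha$ this is exactly the hypothesis \eqref{vy/vx}. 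Part \ref{2.thm:sufficient.1} of Theorem \ref{thm:sufficient.1} then yields $\mathrm{diff}R_{\alpha,\U,\HH}\geq 0$ for every $\mu\in\mathbb{R}^d$, which is the claimed dominance.

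The main obstacle is not in this deduction but is packed into Theorem \ref{thm:sufficient.harmonic}, whose proof (relegated to the Appendix) requires direct analysis of $\Delta_u\{E_{Z_1}[m_\HH^\nu(tZ_1+u,v)]\}^{c/\nu}$ from the explicit form of the harmonic marginal $m_\HH$. Given that result, the derivation of Theorem \ref{thm:mainthm} is the bookkeeping above, and the appearance of the constant $(d+2)/\{d(1+\alpha)\}$ is traced entirely to matching the length $\xi$ in \eqref{eq:suffsuff} against the range permitted by \eqref{thm:sufficient.harmonic.t} with the choice $c = 1/2$.
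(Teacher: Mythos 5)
Your proposal is correct and follows essentially the same route as the paper: it applies Part 2 of Theorem \ref{thm:sufficient.1} with $f=m_\HH^{1/\beta}(\cdot,\vxx\gamma)$ and invokes Theorem \ref{thm:sufficient.harmonic} with $\nu=1/\beta$, $v=\vxx\gamma$, $c=1/2$, matching the admissible $t$-range against $\xi^2=\beta^2\vxx^2\gamma/\vyy$ to recover exactly the bound \eqref{vy/vx}. This is the same bookkeeping the paper carries out in the paragraph preceding the theorem, including the observation that $\nu\geq 2$ is forced by $\alpha\in(-1,1)$.
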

\begin{remark}
 For any $d\geq 3$ and $\alpha\in(-1,1)$, we have
\begin{equation*}
 \frac {d+2}{d(1+\alpha)}> \frac{1}{2}.
\end{equation*} 
 Note that, in most typical situations,
\begin{equation*}
 \frac{\vxx}{\vyy} \leq \frac{1}{2}, 
\end{equation*}
is easily assumed as follows.
Suppose that we have a set of observations $x_1,\dots,x_n$ from $N_d(\mu,\sigma^2I)$.
An unobserved set $x_{n+1},\dots,x_{n+m}$ from the same distribution
is predicted by using a predictive density as a function of $x_1,\dots,x_n$.
From sufficiency,
\begin{align*}
 x=n^{-1}\sum\nolimits_{i=1}^n x_i\sim N_d(\mu,\sigma^2I/n)
 \text{ and }
 y=m^{-1}\sum\nolimits_{i=1}^m x_{n+i}\sim N_d(\mu,\sigma^2I/m)
\end{align*}
and clearly $ \vxx/\vyy =m/n$ in this case. 
Since, $m$ is typically $1$ or $2$ whereas $n$ is relatively large, the condition \eqref{vy/vx}
is satisfied.
\end{remark}
\smallskip
When $\beta=2/(1-\alpha)$ is not an integer, Theorem \ref{thm:sufficient.1.not} can be applied.
Let $\kappa$ be the smallest integer greater than $ 1/\beta$.
Suppose 
\begin{equation}\label{inequ.non}
 \beta^2\frac{\vxx}{\vyy}\vxx\gamma \leq
  \frac{(d+2)\{1-c(\beta)\}\vxx\gamma}{d\kappa(\kappa-1)},
\end{equation}
where $c(\beta)$ is given by \eqref{cofbeta} as
$c(\beta)=c(\{1-\alpha\}/2)=\{\kappa-2/(1-\alpha)+1\}/2$,
the left-hand side is the upper bound of $t$ of Theorem \ref{thm:sufficient.1.not} and
the right-hand side is the upper bound of $t$ of Theorem \ref{thm:sufficient.harmonic}.
When 
\begin{equation*}
 \frac{\vxx}{\vyy} \leq 
  \left(\frac{2}{1-\alpha}\right)^2\frac{d+2}{d}
  \frac{1-\left\{\kappa-2/(1-\alpha)\right\}}{2\kappa(\kappa-1)},
\end{equation*}
which is equivalent to \eqref{inequ.non},
$m_\HH(w,\vxx\gamma)$ satisfies the sufficient condition of Theorem \ref{thm:sufficient.1.not}
and we have a following result.
\begin{thm}\label{thm:mainthm.not}
 Suppose $2/(1-\alpha)$ is not an positive integer for $\alpha\in(-1,1)$.
 Let $\kappa$ be the smallest integer greater than $ 2/(1-\alpha)$.
 Suppose
 \begin{equation}\label{vy/vx.not}
  \frac{\vxx}{\vyy} \leq
   \left(\frac{2}{1-\alpha}\right)^2
   \frac{d+2}{d}
   \frac{1-\left\{\kappa-2/(1-\alpha)\right\}}{2\kappa(\kappa-1)}.
 \end{equation}
 Then the Bayesian predictive density $\hat{p}_\HH(y\mymid x;\alpha)$
 with respect to the harmonic prior $\pi_\HH(\mu)=\|\mu\|^{-(d-2)}$
 dominates the best invariant Bayesian predictive density
 $\hat{p}_\U(y\mymid x;\alpha)=\phi(y-x,\vyy /\gamma)$.
\end{thm}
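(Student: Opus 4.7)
The strategy is to combine Theorem \ref{thm:sufficient.1.not} with Theorem \ref{thm:sufficient.harmonic}: the former provides a sufficient condition for $\mathrm{diff}R_{\alpha,\U,\pi_\HH}\geq 0$ in terms of the superharmonicity of $\{E_{Z_1}[m_\HH^\kappa(tZ_1+u,\vxx\gamma)]\}^{c(\beta)/\kappa}$ on the range $0\leq t\leq \xi$, and the latter, applied to $\pi=\pi_\HH$, guarantees exactly that superharmonicity on an explicit range of $t$. The present theorem is then a matter of matching these two ranges under the hypothesis \eqref{vy/vx.not}.

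First I would specialize Theorem \ref{thm:sufficient.harmonic} with $\nu=\kappa$, $v=\vxx\gamma$, and $c=c(\beta)=(\kappa-1/\beta+1)/2\in(1/2,1)$ (the last inclusion uses $\kappa-1<1/\beta<\kappa$). This yields
\begin{equation*}
\Delta_u\{E_{Z_1}[m_\HH^\kappa(tZ_1+u,\vxx\gamma)]\}^{c(\beta)/\kappa}\leq 0
\end{equation*}
for all $u\in\mathbb{R}^d$, provided $0\leq t\leq \bigl((d+2)(1-c(\beta))\vxx\gamma/(d\kappa(\kappa-1))\bigr)^{1/2}$. To verify the hypothesis \eqref{eq:suffsuff.not} of Theorem \ref{thm:sufficient.1.not}, the upper bound of $t$ here must be at least $\xi$, where $\xi^2=\beta^2\vxx^2\gamma/\vyy$ by \eqref{asym.alpha.1.1}. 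After canceling the common factor $\vxx\gamma$, this is equivalent to
\begin{equation*}
\frac{\vxx}{\vyy}\leq \frac{(d+2)(1-c(\beta))}{d\beta^2\kappa(\kappa-1)}.
\end{equation*}

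Finally I would substitute $1/\beta=2/(1-\alpha)$ and $1-c(\beta)=(1-\{\kappa-2/(1-\alpha)\})/2$, after which the displayed inequality rewrites as exactly \eqref{vy/vx.not}. Hence the hypothesis of Theorem \ref{thm:sufficient.1.not} is fulfilled at $f=m_\HH^{1/\beta}$, and the conclusion $\mathrm{diff}R_{\alpha,\U,\pi_\HH}\geq 0$ follows.

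The genuine technical work lies in Theorem \ref{thm:sufficient.harmonic}, which is deferred to the appendix; once that result is granted, the present theorem reduces to the algebraic consistency check above. The one subtlety worth flagging is keeping track of the exponent: one uses $c(\beta)\in(1/2,1)$ so that Theorem \ref{thm:sufficient.harmonic} (which requires $c\in(0,1)$) is indeed applicable, and this precise value of $c(\beta)$ is what emerges from the Jensen step \eqref{eq:JENSEN} combined with the derivation leading to \eqref{cofbeta}.
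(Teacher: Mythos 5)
Your proposal is correct and follows essentially the same route as the paper: specialize Theorem \ref{thm:sufficient.harmonic} with $\nu=\kappa$, $v=\vxx\gamma$, $c=c(\beta)$, then require the resulting range of $t$ to cover $[0,\xi]$ with $\xi^2=\beta^2\vxx^2\gamma/\vyy$, which after cancelling $\vxx\gamma$ and substituting $1/\beta=2/(1-\alpha)$, $1-c(\beta)=\bigl(1-\{\kappa-2/(1-\alpha)\}\bigr)/2$ is exactly \eqref{vy/vx.not}, so Theorem \ref{thm:sufficient.1.not} applies. Your remark about $c(\beta)\in(1/2,1)$ guaranteeing admissibility of the choice of $c$ matches the paper's \eqref{cofbeta}.
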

By the definition of $\kappa$,  
\begin{equation*}
 \kappa-1<\frac{2}{1-\alpha}<\kappa.
\end{equation*}
As $ 2/(1-\alpha)\uparrow \kappa$, the upper bound given by \eqref{vy/vx.not} approaches
$ (d+2)/\{d(1+\alpha)\}$ which is exactly the upper bound  given by \eqref{vy/vx} of Theorem \ref{thm:mainthm}.
On the other hand, as $ 2/(1-\alpha)\downarrow \kappa-1$, the upper bound given by \eqref{vy/vx.not}
approaches $0$. Figure \ref{fig:1} gives a graph of behavior of the upper bound of
$\vxx/\vyy$ for improvement in Theorems \ref{thm:mainthm} and \ref{thm:mainthm.not}.
This undesirable discontinuity with respect to the upper bound of Theorem \ref{thm:mainthm.not}
is due to Jensen's inequality \eqref{eq:JENSEN} which was not used in the proof of
Theorem \ref{thm:sufficient.1}.
However, we would like to emphasize that, for any $\alpha\in(-1,1)$, there exists a positive
upper bound of of $\vxx/\vyy$ for improvement.
We can naturally make a conjecture that the lower bound of $\vyy/\vxx$ for improvement,
$ d(1+\alpha)/(d+2)$, of Theorem \ref{thm:mainthm} is still valid even if 
$2/(1-\alpha)$ is not an integer.
For that purpose, the methodology for appropriately treating
$E_{Z_1} \left[\{m_\HH( t Z_1+u,\vxx\gamma)\}^{2/(1-\alpha)}\right]$
or more generally $E_{Z_1} \left[\{m_\pi(t Z_1+u,\vxx\gamma)\}^{2/(1-\alpha)}\right]$
for non-integer $2/(1-\alpha)$ is needed and it remains an open problem.

\begin{figure}
\centering
 \scalebox{0.42}{\input{dnorm.tex}}
\caption{The upper bound of $\vxx/\vyy$ in Theorems \ref{thm:mainthm} and \ref{thm:mainthm.not}}
\label{fig:1}
\end{figure}

\appendix
\section{Minimaxity of $\hat{p}_{\U}(y\mymid x ; \alpha)$}
\label{sec:minimaxity}
In this section, we show that
\begin{equation}
 \hat{p}_{\U}(y\mymid x ; \alpha)
  =\phi(y-x,\vyy /\gamma)=\phi(y-x,\vyy+\beta\vxx)
\end{equation}
is minimax, by following Sections II and III of \cite{Liang-Barron-2004}. 
We start with the definition of invariance under location shift.
\begin{definition}
A predictive density $\hat{p}(y\mymid x)$ is invariant under location shift,
if for all $a\in\mathbb{R}^d$ and all $x$, $y$,
$\hat{p}(y+a \mymid x+a)=\hat{p}(y\mymid x)$.
\end{definition}
Hence any invariant predictive density should be of the form
\begin{align*}
 \hat{p}(y\mymid x)=q(y-x)
\end{align*}
which satisfies
\begin{align*}
 \int_{\mathbb{R}^d}q(y)\rd y =1.
\end{align*}
Clearly $ \hat{p}_{\U}(y\mymid x ; \alpha)$ is invariant under location shift.
Note that invariant procedures have constant risk since
the risk of the invariant predictive density $q(y-x)$ is
\begin{equation}\label{eq:risk.q}
 \begin{split}
& R_\alpha\{\phi(y-\mu,\vyy )\bD q(y-x) \} \\
& =\int_{\mathbb{R}^d}\left(\int_{\mathbb{R}^d} f_\alpha\left(\frac{q(y-x)}{\phi(y-\mu,\vyy)}\right)\phi(y-\mu,\vyy)\rd y \right)
 \phi(x-\mu,\vxx)\rd x  \\
& = \int_{\mathbb{R}^d}\left(\int_{\mathbb{R}^d}
 f_\alpha\left(\frac{q(z_y-z_x)}{\phi(z_y,\vyy)}\right)\phi(z_y,\vyy)\rd z_y\right)
 \phi(z_x,\vxx)\rd z_x
\end{split}
\end{equation}
where $z_x=x-\mu$  and $z_y=y-\mu$, which does not depend on $\mu$.
More specifically,
the risk of the invariant predictive density $q(y-x)$ is as follows.
\begin{lemma}\label{lem:risk_invariance}
The risk of an invariant predictive density $q(y-x)$ is
\begin{equation}\label{eq:lem:risk_invariance}
 \begin{split}
& R_\alpha\{\phi(y-\mu,\vyy )\bD q(y-x) \} \\
&= \frac{1-\gamma^{(1-\beta)d/2}}{\beta(1-\beta)}+\gamma^{(1-\beta)d/2}
D_{\alpha}\left\{\phi(z,\vyy/\gamma )\bD q(z)\right\}.
\end{split}
\end{equation}
\end{lemma}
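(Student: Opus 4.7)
The plan is to expand the risk directly using the definition of $f_\alpha$, invoke the completing-the-squares identity \eqref{eq:hirei}, integrate out one variable, and finally repackage what remains as a single $\alpha$-divergence evaluated at $\vyy/\gamma$. I will focus on $|\alpha|<1$ (so $\beta\in(0,1)$); the cases $\alpha=\pm 1$ follow by continuity or a completely parallel computation with $f_{\pm 1}$.

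First I would start from the constant-risk expression \eqref{eq:risk.q} and rewrite $f_\alpha(z)=\{1-z^{1-\beta}\}/\{\beta(1-\beta)\}$ to obtain
\begin{equation*}
R_\alpha\{\phi(y-\mu,\vyy)\bD q(y-x)\}
=\frac{1}{\beta(1-\beta)}\Bigl\{1- I\Bigr\},
\end{equation*}
where
\begin{equation*}
I=\int_{\mathbb{R}^{2d}} q(y-x)^{1-\beta}\,\phi(y-\mu,\vyy)^{\beta}\,\phi(x-\mu,\vxx)\,\rd y\,\rd x.
\end{equation*}
Since the risk is constant in $\mu$, I may take $\mu=0$ (or equivalently substitute $z_x=x-\mu$, $z_y=y-\mu$ as in \eqref{eq:risk.q}).

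Next I would apply the key identity \eqref{eq:hirei},
\begin{equation*}
\phi(x,\vxx)\phi^\beta(y,\vyy)
=\gamma^{(1-\beta)d/2}\phi(\gamma x+(1-\gamma)y,\vxx\gamma)\phi^\beta(y-x,\vyy/\gamma),
\end{equation*}
and perform the linear change of variables $w=\gamma x+(1-\gamma)y$, $z=y-x$, whose Jacobian equals $1$. This decouples the integral: the $w$ integral of $\phi(w,\vxx\gamma)$ equals $1$, and what survives is
\begin{equation*}
I=\gamma^{(1-\beta)d/2}\int_{\mathbb{R}^d} q(z)^{1-\beta}\phi(z,\vyy/\gamma)^{\beta}\,\rd z.
\end{equation*}

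Finally, I would recognize the remaining one-dimensional (in the sense of one $d$-dimensional integral) expression as tied to $D_\alpha\{\phi(z,\vyy/\gamma)\bD q(z)\}$. Indeed, from the definition of $D_\alpha$,
\begin{equation*}
\int_{\mathbb{R}^d} q(z)^{1-\beta}\phi(z,\vyy/\gamma)^{\beta}\rd z
=1-\beta(1-\beta)\,D_\alpha\{\phi(z,\vyy/\gamma)\bD q(z)\}.
\end{equation*}
Substituting this into $R_\alpha=\{1-I\}/\{\beta(1-\beta)\}$ yields \eqref{eq:lem:risk_invariance}. There is no real obstacle here: the only thing to be careful about is book-keeping the exponent $(1-\beta)d/2$ and checking that the Jacobian of the $(x,y)\mapsto(w,z)$ map really is $1$ (which it is, since $\det\bigl(\begin{smallmatrix}\gamma & 1-\gamma\\ -1 & 1\end{smallmatrix}\bigr)=1$). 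The $\alpha=\pm 1$ boundary cases can be handled by repeating the same calculation with $f_{1}(z)=z\log z$ and $f_{-1}(z)=-\log z$, or by taking the appropriate limit in \eqref{eq:lem:risk_invariance}.
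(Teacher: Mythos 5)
Your proposal is correct and follows essentially the same route as the paper: expand $f_\alpha$ using $4/(1-\alpha^2)=1/\{\beta(1-\beta)\}$, set $\mu=0$ by invariance, apply the identity \eqref{eq:hirei}, change variables to $(w,z)=(\gamma x+(1-\gamma)y,\,y-x)$ with unit Jacobian, integrate out $w$, and repackage the surviving integral as $D_\alpha\{\phi(z,\vyy/\gamma)\bD q(z)\}$. The only superfluous remark is about $\alpha=\pm1$: the lemma lives in the $\alpha\in(-1,1)$ setting where $\beta\in(0,1)$, so no boundary case is needed.
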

\begin{proof}
By \eqref{eq:risk.q} and the definition of $\alpha$-div loss,
\begin{align*}
& R_\alpha\{\phi(y-\mu,\vyy )\bD q(y-x) \} \\
 &= \frac{1}{\beta(1-\beta)}
 \left\{
 1- \int_{\mathbb{R}^d}\int_{\mathbb{R}^d}q^{1-\beta}(y-x)\phi^\beta(y,\vyy)\phi(x,\vxx)\rd x \rd y 
 \right\} .
\end{align*}
By the identity \eqref{eq:hirei} with $\mu=0$, we have
\begin{equation*}
  \phi\left(x,\vxx \right)\phi^\beta( y,\vyy ) 
  =\gamma^{(1-\beta)d/2}\phi(\gamma x+(1-\gamma)y,\vxx \gamma)\phi^\beta(y-x,\vyy /\gamma),
\end{equation*}
 and hence
 \begin{align*}
 R_\alpha\{\phi(y-\mu,\vyy )\bD q(y-x) \} 
 &= \frac{1}{\beta(1-\beta)}
 \left\{
 1- \gamma^{(1-\beta)d/2}\int_{\mathbb{R}^d}\int_{\mathbb{R}^d}
 q^{1-\beta}(y-x)\right. \\
 &\quad \times \left. \phi^\beta(y-x,\vyy/\gamma)\phi(\gamma x+(1-\gamma)y,\vxx\gamma)\rd x \rd y 
 \right\}.
 \end{align*}
By the change of variables,
\begin{equation}
 \begin{pmatrix}
  w \\ z 
 \end{pmatrix}
=\begin{pmatrix}
  \gamma I_d & (1-\gamma)I_d \\ - I_d & I_d
 \end{pmatrix}
 \begin{pmatrix}
  x \\ y
 \end{pmatrix}
\end{equation}
where Jacobian of the matrix is $1$, we have
\begin{align*}
& R_\alpha\{\phi(y-\mu,\vyy )\bD q(y-x) \} \\
 &= \frac{1}{\beta(1-\beta)}
 \left\{
 1- \gamma^{(1-\beta)d/2}\int_{\mathbb{R}^d}\int_{\mathbb{R}^d}
 q^{1-\beta}(z)\phi^\beta(z,\vyy/\gamma)\phi(w,\vxx\gamma)\rd z\rd w 
 \right\} \\
 &= \frac{1}{\beta(1-\beta)}
 \left\{
 1- \gamma^{(1-\beta)d/2}\int_{\mathbb{R}^d}
 q^{1-\beta}(z)\phi^\beta(z,\vyy/\gamma)\rd z
 \right\} \\
 &= \frac{1-\gamma^{(1-\beta)d/2}}{\beta(1-\beta)}+\gamma^{(1-\beta)d/2}
D_{\alpha}\left\{\phi(z,\vyy/\gamma )\bD q(z)\right\}.
\end{align*}
\end{proof}
In \eqref{eq:lem:risk_invariance} of Lemma \ref{lem:risk_invariance}, 
$D_{\alpha}\left\{\phi(z,\vyy/\gamma )\bD q(z)\right\}$ is non-negative and
takes zero if and only if $q(z)=\phi(z,\vyy/\gamma )$.
Hence the best invariant procedure is $ \hat{p}_{\U}(y\mymid x ; \alpha)=\phi(y-x,\vyy/\gamma )$,
where the constant risk is
\begin{align*}
 \frac{1-\gamma^{(1-\beta)d/2}}{\beta(1-\beta)}.
\end{align*}
Since the risk is constant for invariant predictive density,
the best invariant $ \hat{p}_{\U}(y\mymid x ; \alpha)$
is the minimax procedure among all invariant procedures.
If a constant risk procedure is shown to have an extended Bayes property defined in the below,
then it is, in fact, minimax over all procedures. See Theorem 5.18 of \cite{Berger-1985}
and Theorem 5.1.12 of \cite{Lehmann-Casella-1998} for the detail.
\begin{definition}
 A predictive procedure $\hat{p}_*(y\mymid x)$ is called extended Bayes,
 if there exists a sequence of Bayes
 procedures $\hat{p}_{\pi_c}(y\mymid x; \alpha)$ with proper prior densities $\pi_c(\mu)$ for $c=1,\dots,$
 such chat their Bayes risk differences go to zero,
 that is,
 \begin{align*}
  &\lim_{c\to\infty}\left(
  \int_{\mathbb{R}^d}R_\alpha\{\phi(y-\mu,\vyy )\bD  \hat{p}_*(y\mymid x)\}\pi_c(\mu)\rd\mu  \right.\\
  & \quad\qquad \left.
-\int_{\mathbb{R}^d}R_\alpha\{\phi(y-\mu,\vyy )\bD  \hat{p}_{\pi_c}(y\mymid x; \alpha)\}\pi_c(\mu)\rd\mu \right)=0.
 \end{align*}
\end{definition}
%
Recall that
\begin{equation} \label{gene-B-density-app}
 \hat{p}_{\pi}(y\mymid x; \alpha) \propto
 \left\{\int\mbd \phi^\beta(y-\mu,\vyy )\phi(x-\mu,\vxx )\pi(\mu)\rd\mu  \right\}^{1/\beta}
\end{equation}
for $\beta=(1-\alpha)/2$ and $\alpha\in(-1,1)$.
Under the prior $\mu\sim N_d(0,\{c\vxx\gamma\}I)$ with the density 
$\pi_{c}(\mu)=\phi(\mu,c\vxx\gamma)$, the Bayesian solution is
\begin{align*}
 \hat{p}_{\pi_c}(y\mymid x;\alpha)=\phi\left(y-\frac{c\gamma}{1+c\gamma}x,\vyy\frac{1+c}{1+c\gamma}\right)
\end{align*}
by 
the identity
\begin{equation}\label{eq:hirei.1}
 \begin{split}
  &\phi^\beta( y -\mu,\vyy )\phi\left(x-\mu,\vxx \right)\phi(\mu,c\vxx\gamma) \\      
  &=
\left(\frac{1+c\gamma}{1+c}\right)^{d(1-\beta)/2}
  \phi\left(\mu-c\frac{\gamma x+(1-\gamma)y}{1+c},\frac{c\vxx \gamma}{1+c}\right)\\
&\qquad\times\phi^\beta\left(y-\frac{c\gamma x}{1+c\gamma},\vyy\frac{1+c}{1+c\gamma}\right)
  \phi\left(x,\vxx(1+c\gamma)\right). 
 \end{split}
\end{equation}
Furthermore, by the identity \eqref{eq:hirei.1}, the Bayes risk of
$ \hat{p}_{\pi_c}(y\mymid x;\alpha)$
is given by
\begin{align*}
& \frac{1}{\beta(1-\beta)}
 \biggl(1-\int_{\mathbb{R}^d}\int_{\mathbb{R}^d}\int_{\mathbb{R}^d}
 \left\{\frac{\hat{p}_{\pi_c}(y\mymid x;\alpha)}{\phi(y-\mu,\vyy)}\right\}^{1-\beta}
  \\
&\qquad \qquad \qquad 
\times\phi\left(x-\mu,\vxx \right)\phi( y -\mu,\vyy )\phi(\mu,c\vxx\gamma)\rd x \rd y \rd\mu \biggr) \\
& =\frac{1}{\beta(1-\beta)}
 \left\{1-\left(\frac{1+c\gamma}{1+c}\right)^{d(1-\beta)/2}
 \int_{\mathbb{R}^d}\int_{\mathbb{R}^d}\int_{\mathbb{R}^d}
  \phi\left(\mu-c\frac{\gamma x+(1-\gamma)y}{1+c},\frac{c\vxx \gamma}{1+c}\right) \right. \\
&\qquad\times\left. \phi\left(y-\frac{c\gamma x}{1+c\gamma},\vyy\frac{1+c}{1+c\gamma}\right)
  \phi\left(x,\vxx(1+c\gamma)\right) \rd\mu  \rd y  \rd x  \right\}\\ 
 &=\frac{1}{\beta(1-\beta)}\left\{1-\left(\frac{1+c\gamma}{1+c}\right)^{d(1-\beta)/2}\right\},
\end{align*}
which approaches $ (1-\gamma^{(1-\beta)d/2})/\{\beta(1-\beta)\}$ as $c$ goes to infinity,
the constant risk of $\hat{p}_{\U}(y\mymid x ; \alpha)$.
Hence $\hat{p}_{\U}(y\mymid x ; \alpha)$ is extended Bayes and hence minimax.

%

 \section{Proof of Theorem \ref{thm:sufficient.harmonic}}
\label{sec:proof.harmonic}
Recall  the identity 
\begin{equation}\label{beki}
 \|\mu\|^{-(d-2)}
  =b\int_0^\infty g^{d/2-2}\exp\left(-g\frac{\|\mu\|^2}{2v}\right)\rd g
\end{equation}
for any $v>0$, where $b=1/\{\Gamma(d/2-1)2^{d/2-1}v^{d/2-1}\}$.
Then  we have
      \begin{align*}
       m_\HH(w,v)
       &=\int\mbd\phi(w-\mu,v)\|\mu\|^{-(d-2)}\rd\mu  \\
       &=b\int_0^\infty g^{d/2-2} \rd g \int\mbd\frac{1}{(2\pi)^{d/2}v^{d/2}}
       \exp\left(-\frac{\|w-\mu\|^2}{2v}-g\frac{\|\mu\|^2}{2v}\right) \rd\mu  \\
       &=b\int_0^\infty\frac{g^{d/2-2}}{(1+g)^{d/2}}\exp\left(-\frac{g\|w\|^2}{2(g+1)v}\right)\rd g\\
       &=b\int_0^1\lambda^{d/2-2}\exp\left(-\frac{\lambda\|w\|^2}{2v}\right)\rd\lambda ,
      \end{align*}
where the third equality is from the relation of completing squares with respect to $\mu$
\begin{equation*}
 \|w-\mu\|^2+g\|\mu\|^2=(g+1)\|\mu-w/(g+1)\|^2+\{g/(g+1)\}\|w\|^2
\end{equation*}
and the fourth equality is from the transformation $\lambda=g/(g+1)$.

Note that $m^\nu_\HH(w,v)$ for a positive integer $\nu$ is expressed as
\begin{align*}
 m^\nu_\HH(w,v)
 = b^\nu \int_{\mathcal{D}_{\nu}}\prod_{i=1}^\nu \lambda_i^{d/2-2}
      \exp\left(-\frac{\sum_{i=1}^\nu \lambda_i \|w\|^2}{2v}\right)\prod \rd\lambda_i,
\end{align*}
where $\mathcal{D}_\nu$ is $\nu$-dimensional unit hyper-cube.
In the following, $ \rd\lambda $ denotes $\prod_{i=1}^\nu \rd\lambda_i$ for notational simplicity.
Furthermore the subscript and superscript of $\prod$ and $\sum$ is omitted for simplicity
if they are $i=1$ and $i=\nu$ respectively. Hence $m^\nu_\HH(w,v)$ in the above is written as
\begin{align*}
 m^\nu_\HH(w,v)
 = b^\nu \int_{\mathcal{D}_{\nu}}\prod \lambda_i^{d/2-2}
   \exp\left(-\frac{\sum \lambda_i \|w\|^2}{2v}\right)\rd\lambda .
\end{align*}
 
For the calculation of
\begin{equation}
 E_{Z_1} \left[m^\nu_\HH( t Z_1+u,v)\right]
  =\int\mbd m^\nu_\HH(x+u,v)\phi(x,t^2)\rd x 
\end{equation}
under $Z_1\sim N_d(0,I)$,
note the relation of completing squares with respect to $x$,
\begin{equation}\label{completing_squares_tau}
 \begin{split}
 & \frac{\left(\sum\lambda_i\right)\|x+u\|^2}{v}+\frac{\|x\|^2}{t^2} 
  =\frac{1}{v}\left\{\sum\lambda_i\|x+u\|^2+s\|x\|^2\right\} \\
 & =\frac{1}{v}\left\{\left(\sum\lambda_i+s\right)\left\|
 x+\frac{\sum\lambda_i}{\sum\lambda_i+s}u\right\|^2
 +\frac{s\sum\lambda_i}{\sum\lambda_i+s}\|u\|^2\right\},
\end{split}
\end{equation}
where $s=v/t^2$.
Then, by \eqref{completing_squares_tau}, we have
\begin{align*}
 E_{Z_1}[m^\nu_\HH(t Z_1+u,v)]
 =\frac{b^\nu v^{d/2}}{t^d}
 \int_{\mathcal{D}_{\nu}}\frac{\prod \lambda_i^{d/2-2}}{(\sum\lambda_i+s)^{d/2}} 
\exp\left(-\frac{s\sum\lambda_i}{v(\sum\lambda_i+s)}\frac{\|u\|^2}{2}\right)\rd\lambda .
\end{align*}
Re-define $u:=\{s/v\}^{1/2}u$ and let
\begin{equation}\label{psipsipsi}
 \psi(u;\nu,s)=\int_{\mathcal{D}_{\nu}}
\frac{\prod \lambda_i^{d/2-2}}{ (\sum\lambda_i+s)^{d/2}}
 \exp\left(-\frac{\sum\lambda_i}{\sum\lambda_i+s}\frac{\|u\|^2}{2}\right)\rd\lambda .
\end{equation}
By \eqref{eq:rule_1},
the super-harmonicity of $\left\{E_{Z_1} \left[m^\nu_\HH(t Z_1+u,v)\right]\right\}^{c/\nu}$
with respect to $u\in\mathbb{R}^d$ is equivalent to 
\begin{equation}\label{eq:suffsuffsuff}
 \left(\frac{c}{\nu}-1\right)\|\nabla_u \psi\|^2 + \psi\Delta_u\psi \leq 0,
 \quad \forall u\in\mathbb{R}^d.
\end{equation}
The integrand of $\psi$ given by \eqref{psipsipsi} is denoted by
\begin{equation*}
 \zeta(\lambda)
  = \zeta(\lambda_1,\dots,\lambda_\nu)
  =\frac{\prod \lambda_i^{d/2-2}}{ (\sum\lambda_i+s)^{d/2}}
 \exp\left(-\frac{\sum\lambda_i}{\sum\lambda_i+s}z\right)
\end{equation*}
where $z=\|u\|^2/2$. Then we have
\begin{equation*}
 \frac{\partial}{\partial u_j}\psi
  =-u_j\int \zeta(\lambda)\frac{\sum\lambda_i}{\sum\lambda_i+s}\rd\lambda , 
\end{equation*}
for $j=1,\dots,d$ and
\begin{align*}
 \frac{\partial^2}{\partial u_j^2}\psi 
 =\int \zeta(\lambda)\left\{-\frac{\sum\lambda_i}{\sum\lambda_i+s}
 +u_j^2\left(\frac{\sum\lambda_i}{\sum\lambda_i+s}\right)^2\right\}\rd\lambda .
\end{align*}
Noting $z=\|u\|^2/2$, we have
\begin{equation}\label{nabla_psi}
 \|\nabla_u \psi\|^2
  =2z\left(\int \zeta(\lambda) \frac{\sum\lambda_i}{\sum\lambda_i+s}\rd\lambda \right)^2 
=2 \nu^2 z \left(\int \zeta(\lambda)\frac{\lambda_1}{\sum\lambda_i+s}\rd\lambda \right)^2
\end{equation}
and
\begin{equation}\label{lap_psi}
\begin{split}
 \Delta_u\psi&=
 -d\int \zeta(\lambda) \frac{\sum\lambda_i}{\sum\lambda_i+s}\rd\lambda 
  +2z\int \zeta(\lambda)\left(\frac{\sum\lambda_i}{\sum\lambda_i+s}\right)^2 \rd\lambda  \\
 & =-d\nu\int \zeta(\lambda)\frac{\lambda_1}{\sum\lambda_i+s}\rd\lambda 
  +2\nu z\int \zeta(\lambda)\frac{\lambda_1^2}{(\sum\lambda_i+s)^2}\rd\lambda  \\
 &\quad +2\nu(\nu-1)z\int \zeta(\lambda) \frac{\lambda_1\lambda_2}{(\sum\lambda_i+s)^2}\rd\lambda . 
\end{split} 
\end{equation}
In \eqref{nabla_psi} and \eqref{lap_psi},
the second equalities are from symmetry with respect to $\lambda_i$'s.

Let
\begin{align*}
 \rho(j_1,j_2,l)
 &= \int_{\mathcal{D}_\nu} \lambda_1^{j_1}\lambda_2^{j_2}(\sum\lambda_i+s)^{l}
 \zeta(\lambda) \rd\lambda , \\
 \eta(j_2,l)
 &= \int_{\mathcal{D}_{\nu-1}}  \lambda_2^{j_2}\left(1+\sum\nolimits_{i=2}\lambda_i+s\right)^{l}
 \zeta(1,\lambda_2,\dots,\lambda_\nu) \prod_{i=2} \rd\lambda_i,
\end{align*}
where $j_1$ and $j_2$ are nonnegative integers. Then $\|\nabla_u \psi\|^2$ and
$ \Delta_u \psi$ given by \eqref{nabla_psi} and \eqref{lap_psi} is rewritten as
\begin{equation}\label{psi2deltapsi}
\begin{split}
 \|\nabla_u \psi\|^2
 &=2\nu^2 z\rho(1,0,-1)^2, \\
 \Delta_u \psi
 &=-d\nu \rho(1,0,-1) +2\nu z \rho(2,0,-2)+2\nu(\nu-1)z \rho(1,1,-2).
\end{split}  
\end{equation}
Here are some useful relationships and inequalities.
\begin{lemma}\label{useful.lemma}
\begin{align}
 sz\rho(j_1,j_2,l)
 &=-\eta(j_2,l+2)+(j_1+d/2-2)\rho(j_1-1,j_2,l+2)  \notag \\
 &\quad +(l-d/2+2)\rho(j_1,j_2,l+1), \text{ for }j_1\geq 1, \label{rinsetsu.0} \\
 \rho(0,0,l)
 &=\nu\rho(1,0,l-1)+s\rho(0,0,l-1), \label{rinsetsu.1}\\
 \rho(1,0,l)
 &=\rho(2,0,l-1)+(\nu-1)\rho(1,1,l-1)+s\rho(1,0,l-1),\label{rinsetsu.2} \\
 \eta(0,1)
 &=\eta(0,0)+(\nu-1)\eta(1,0)+s\eta(0,0),\label{rinsetsu.3} \\
 \eta(0,1)\rho(0,0,-1)
 &\geq \eta(0,0)\rho(0,0,0),\label{FKG.inequality} \\
 \frac{\rho(1,0,-1)}{\rho(1,0,0)}
 &\geq \frac{1}{\nu d/(d+2) +s }.  \label{inequality.000}
\end{align}
\end{lemma}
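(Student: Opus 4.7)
The four identities \eqref{rinsetsu.0}--\eqref{rinsetsu.3} are essentially algebraic and I would dispatch them first. For \eqref{rinsetsu.1}--\eqref{rinsetsu.3}: writing the top power of $A:=\sum\lambda_i+s$ (respectively, $1+\sum_{i\geq 2}\lambda_i+s$) as $A^{l-1}\cdot A$ and distributing, then using permutation symmetry of $\zeta$ (respectively $\zeta(1,\cdot)$) to collapse the $\nu$-term sum into $\nu$ copies of a representative (e.g.\ $\int(\sum\lambda_i)A^{l-1}\zeta\rd\lambda=\nu\rho(1,0,l-1)$) yields each identity immediately. For the key identity \eqref{rinsetsu.0} I would apply the fundamental theorem of calculus in $\lambda_1\in[0,1]$ to $F(\lambda):=\lambda_1^{j_1}\lambda_2^{j_2}A^{l+2}\zeta(\lambda)$. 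Direct differentiation, using the exponent $-z(1-s/A)$ inside $\zeta$ so that $\partial_{\lambda_1}(sz/A)=-sz/A^2$, gives
\begin{equation*}
\partial_{\lambda_1}F=(j_1+d/2-2)\lambda_1^{j_1-1}\lambda_2^{j_2}A^{l+2}\zeta+(l+2-d/2)\lambda_1^{j_1}\lambda_2^{j_2}A^{l+1}\zeta-sz\,\lambda_1^{j_1}\lambda_2^{j_2}A^l\zeta.
\end{equation*}
Integrating in $\lambda_1$ and then in $\lambda_{-1}\in[0,1]^{\nu-1}$, the boundary term at $\lambda_1=0$ vanishes (the factor $\lambda_1^{j_1+d/2-2}\to 0$ for $j_1\geq 1$ and $d\geq 3$), while the boundary at $\lambda_1=1$ is exactly $\eta(j_2,l+2)$; rearranging produces \eqref{rinsetsu.0}.

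For \eqref{inequality.000} I would chain three bounds. First, Cauchy--Schwarz on the measure $\lambda_1\zeta\rd\lambda$ with $A^{1/2}$ and $A^{-1/2}$ gives $\rho(1,0,0)^2\leq\rho(1,0,1)\rho(1,0,-1)$. Second, identity \eqref{rinsetsu.2} with $l=1$ together with the symmetry-plus-Cauchy--Schwarz bound $\rho(1,1,0)\leq\rho(2,0,0)$ gives $\rho(1,0,1)\leq\nu\rho(2,0,0)+s\rho(1,0,0)$. Third, I would prove $\rho(2,0,0)/\rho(1,0,0)\leq d/(d+2)$ by integrating out $\lambda_{-1}$: writing $\rho(j,0,0)=\int_0^1\lambda_1^{j+d/2-2}W(\lambda_1)\rd\lambda_1$ with $W(\lambda_1):=\int_{[0,1]^{\nu-1}}\prod_{i\geq 2}\lambda_i^{d/2-2}A^{-d/2}\exp[-z(1-s/A)]\rd\lambda_{-1}$, one sees $W$ is decreasing on $[0,1]$ because $A^{-d/2}\exp[-z(1-s/A)]$ is decreasing in $\lambda_1$ for fixed $\lambda_{-1}$. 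Chebyshev's integral inequality against the probability density $\propto\lambda_1^{d/2-1}$ on $[0,1]$ then bounds the ratio by $(d/2)/(d/2+1)=d/(d+2)$. Combining gives $\rho(1,0,-1)/\rho(1,0,0)\geq\rho(1,0,0)/\rho(1,0,1)\geq 1/[\nu d/(d+2)+s]$.

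The main obstacle is \eqref{FKG.inequality}, which compares $\zeta$ to its restriction to $\{\lambda_1=1\}$ and is not accessible by algebra alone. My plan is a probabilistic reformulation: let $X$ have density proportional to $\zeta$ on $[0,1]^\nu$ and let $Y_{-1}$ have density proportional to $\zeta(1,\cdot)$ on $[0,1]^{\nu-1}$. Then \eqref{FKG.inequality} reads $E[A(1,Y_{-1})]\cdot E[A(X)^{-1}]\geq 1$, and since Jensen's inequality gives $E[A(X)^{-1}]\geq 1/E[A(X)]$, it suffices to establish $E[A(1,Y_{-1})]\geq E[A(X)]$. Expanding using full symmetry of $\zeta$ in $\lambda_1,\dots,\lambda_\nu$, this reduces to
\begin{equation*}
(1-E[X_1])+(\nu-1)(E[X_2\mid X_1=1]-E[X_2])\geq 0.
\end{equation*}
The first term is trivial since $X_1\leq 1$. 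The second reduces to the claim that $x_1\mapsto E[X_2\mid X_1=x_1]$ is non-decreasing, which I would deduce from log-supermodularity of $\zeta$ on the lattice $[0,1]^\nu$: a direct computation gives $\partial^2_{\lambda_i\lambda_j}\log\zeta=d/(2A^2)+2sz/A^3\geq 0$ for $i\neq j$ (using $z\geq 0$). Log-supermodularity delivers the monotone-likelihood-ratio property in each coordinate pair, which via FKG yields stochastic ordering of the conditional law of $X_2$ given $X_1=x_1$ in the conditioning value, and hence the monotonicity of the conditional expectation; averaging over $X_1\leq 1$ then concludes $E[X_2\mid X_1=1]\geq E[X_2]$.
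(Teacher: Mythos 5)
Your proof is correct, and for the two inequalities it takes a genuinely different route from the paper, while for \eqref{rinsetsu.0}--\eqref{rinsetsu.3} it coincides with the paper's argument (integration by parts in $\lambda_1$ with the vanishing boundary term at $0$ and the $\eta$ term at $\lambda_1=1$, plus permutation symmetry). For \eqref{FKG.inequality} the paper rearranges the difference $\eta(0,0)\rho(0,0,0)-\eta(0,1)\rho(0,0,-1)$ into a form to which the Karlin--Rinott four-functions theorem (Theorem \ref{thm:Karlin-Rinott-1980}) applies, using the pointwise inequality \eqref{eq:mtp2.1} together with the MTP2 property of $\zeta$ (Lemma \ref{lem:MTP2}); you instead pass to the probabilistic form $E[A(1,Y_{-1})]\,E[A(X)^{-1}]\geq 1$, use Jensen to reduce it to $E[A(1,Y_{-1})]\geq E[A(X)]$, and then to the monotonicity of $x_1\mapsto E[X_2\mid X_1=x_1]$, which you obtain from log-supermodularity of $\zeta$ via the standard fact that MTP2 joint densities have stochastically increasing conditionals (this rests on MTP2 being preserved under marginalization and multivariate likelihood-ratio ordering implying stochastic ordering, both in Karlin--Rinott, so cite them explicitly). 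Your verification of log-supermodularity by the mixed partials $\partial^2_{\lambda_i\lambda_j}\log\zeta=d/(2A^2)+2sz/A^3\geq 0$ is a clean substitute for the paper's lattice computation in Lemma \ref{lem:MTP2}. For \eqref{inequality.000} the paper applies Jensen directly under the tilted measure $\lambda_1\zeta/\rho(1,0,0)$ and then bounds $\rho(2,0,0)/\rho(1,0,0)$ and $\rho(1,1,0)/\rho(1,0,0)$ by the multivariate FKG inequality (Theorem \ref{thm:Karlin-Rinott-1980_FKG}); your chain Cauchy--Schwarz $\rho(1,0,0)^2\leq\rho(1,0,1)\rho(1,0,-1)$, then \eqref{rinsetsu.2} with $l=1$ and $\rho(1,1,0)\leq\rho(2,0,0)$, then the one-dimensional Chebyshev covariance inequality after integrating out $\lambda_2,\dots,\lambda_\nu$ (using that $A^{-d/2}e^{zs/A}$ is decreasing in $\lambda_1$), reaches the same constant $d/(d+2)$ while avoiding the multivariate FKG machinery altogether, which is arguably more elementary; the paper's route, in exchange, yields the slightly sharper intermediate bound $\rho(1,1,0)/\rho(1,0,0)\leq(d-2)/d$, though this makes no difference to the final constant.
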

\begin{proof}
 See Sub-section \ref{subsec:useful.lemma}.
\end{proof}
Applying the identity \eqref{rinsetsu.0} to  $\|\nabla_u \psi\|^2$ and $ \Delta_u \psi$
given in \eqref{psi2deltapsi}, we have
\begin{align*}
 s\|\nabla_u \psi\|^2
&=2\nu^2\{sz\rho(1,0,-1)\}\rho(1,0,-1) \\
 &=\nu^2\left\{-2\eta(0,1)+(d-2)\rho(0,0,1)-(d-2)\rho(1,0,0)\right\}\rho(1,0,-1), \\
 s\Delta_u \psi
 &=-d\nu s \rho(1,0,-1) +\nu\{-2\eta(0,0)+d\rho(1,0,0)-d\rho(2,0,-1)\} \\
& \qquad +\nu(\nu-1)\left\{-2\eta(1,0)+(d-2)\rho(1,0,0)-d\rho(1,1,-1)\right\} \\
&=-2\nu\{\eta(0,0)+(\nu-1)\eta(1,0)\}+\nu(\nu-1)(d-2)\rho(1,0,0),
\end{align*}
where the second equality of $s\Delta_u \psi$ follows from \eqref{rinsetsu.2}.
Then we have
\begin{align}
&\frac{s}{\nu}\left(\frac{c-\nu}{\nu}\|\nabla_u \psi\|^2+\psi \Delta_u \psi\right)  \label{eq:2skappa}\\
&=(\nu-c)\left[2\eta(0,1)-(d-2)\{\rho(0,0,1)-\rho(1,0,0)\}\right] \rho(1,0,-1) \notag \\
 &\quad -2\{\eta(0,0)+(\nu-1)\eta(1,0)\}\rho(0,0,0) +(\nu-1)(d-2)\rho(1,0,0)\rho(0,0,0).\notag
\end{align}
By applying \eqref{rinsetsu.1}, \eqref{rinsetsu.3} and \eqref{FKG.inequality},
the terms of \eqref{eq:2skappa} including $\eta(\cdot,\cdot)$, divided by $2$, is
\begin{equation}\label{terms.including.tau}
 \begin{split}
 &(\nu-c)\eta(0,1)\rho(1,0,-1)-\{\eta(0,0)+(\nu-1)\eta(1,0)\}\rho(0,0,0)\\
 &=(\nu-c)\eta(0,1)\rho(1,0,-1)-\{\eta(0,1)-s\eta(0,0)\}\rho(0,0,0) \\
 &=(\nu-c)\eta(0,1)\rho(1,0,-1)-\eta(0,1)\left\{\nu\rho(1,0,-1)+s\rho(0,0,-1)\right\}  \\
 &\qquad +s\eta(0,0)\rho(0,0,0) \\
 &=-c\eta(0,1)\rho(1,0,-1)-s\{\eta(0,1)\rho(0,0,-1)-\eta(0,0)\rho(0,0,0)\} \\
 &\leq 0,
\end{split}
\end{equation}
where the first equality follows from \eqref{rinsetsu.3},
the second equality follows from \eqref{rinsetsu.1} and
the inequality follows from \eqref{FKG.inequality}.

The terms of \eqref{eq:2skappa} not including $\eta(\cdot,\cdot)$, divided by $(d-2)$, are rewritten as
\begin{equation}\label{terms.no.including.tau}
 \begin{split}
 & (\nu-c)\left\{-\rho(0,0,1)+\rho(1,0,0)\right\}\rho(1,0,-1)+(\nu-1)\rho(1,0,0)\rho(0,0,0) \\
 &=-(\nu-c)(\nu-1)\rho(1,0,0)\rho(1,0,-1)-(\nu-c)s\rho(0,0,0)\rho(1,0,-1) \\
 &\qquad + (\nu-1)\rho(1,0,0)\rho(0,0,0) \\
 &\leq -\left\{\frac{(\nu-c)s}{\nu d/(d+2)+s}-(\nu-1)\right\}\rho(1,0,0)\rho(0,0,0) \\
 &= -\frac{(1-c)s-\nu(\nu-1)d/(d+2)}{\nu d/(d+2)+s}\rho(1,0,0)\rho(0,0,0), 
\end{split}
\end{equation}
which is nonpositive for $s\geq \nu(\nu-1)d/\{(1-c)(d+2)\}$,
where the first equality follows from \eqref{rinsetsu.1} and the inequality follows from
\eqref{inequality.000}.

By \eqref{terms.including.tau} and \eqref{terms.no.including.tau},
we have 
\begin{equation*}
 \left(\frac{c}{\nu}-1\right)\|\nabla_u \psi\|^2 + \psi\Delta_u\psi \leq 0,
 \quad \forall u\in\mathbb{R}^d
\end{equation*}
or equivalently
\begin{equation*}
 \Delta_u\left\{E_{Z_1} \left[m^\nu_\HH( t Z_1+u,v)\right]\right\}^{c/\nu}
  \leq 0,\quad\forall u\in\mathbb{R}^d,
\end{equation*}
when $t\leq \{(d+2)(1-c)v/\{d\nu(\nu-1)\}\}^{1/2}$.

\subsection{Proof of Lemma \ref{useful.lemma}}
\label{subsec:useful.lemma}
[Part of \eqref{rinsetsu.0}] \quad Note 
\begin{align}\label{useful}
 \frac{\partial}{\partial \lambda_1}\exp\left(-\frac{z\sum\lambda_i}{\sum\lambda_i+s}\right)
=-\frac{sz}{(\sum\lambda_i+s)^2}\exp\left(-\frac{z\sum\lambda_i}{\sum\lambda_i+s}\right).
\end{align}
Then, by an integration by parts, we have
 \begin{align*}
& sz \int_0^1 \lambda_1^{j_1}\lambda_2^{j_2}(\sum\lambda_i+s)^l \zeta(\lambda)\rd\lambda_1 \\
&=-\lambda_2^{d/2-2+j_2}\prod_{i=3}\lambda_i^{d/2-2}
\int_0^1 \lambda_1^{d/2-2+j_1}(\sum\lambda_i+s)^{l-d/2+2} \\
&\qquad \times\left\{ \frac{\partial}{\partial \lambda_1} 
  \exp\left(-\frac{z\sum\lambda_i}{\sum\lambda_i+s}\right)\right\}\rd\lambda_1 \\
  &=-\lambda_2^{d/2-2+j_2}\prod_{i=3}\lambda_i^{d/2-2}
  \left\{\left[\lambda_1^{d/2-2+j_1}(\sum\lambda_i+s)^{l-d/2+2}\exp\left(-\frac{z\sum\lambda_i}{\sum\lambda_i+s}\right)\right]_0^1  \right. \\
  &\quad \left. -(d/2-2+j_1)\int_0^1
  \lambda_1^{d/2-3+j_1}(\sum\lambda_i+s)^{l-d/2+2}\exp\left(-\frac{z\sum\lambda_i}{\sum\lambda_i+s}\right)\rd \lambda_1 \right. \\
  &\quad \left. -(l-d/2+2)\int_0^1
  \lambda_1^{d/2-2+j_1}(\sum\lambda_i+s)^{l-d/2+1}\exp\left(-\frac{z\sum\lambda_i}{\sum\lambda_i+s}\right)\rd \lambda_1 \right\}.
 \end{align*}
 \eqref{rinsetsu.0} follows from integration with respect to $\lambda_2,\dots,\lambda_\nu$
 in the both hand side of the above equality.
 
\smallskip

[Parts of \eqref{rinsetsu.1}, \eqref{rinsetsu.2} and \eqref{rinsetsu.3}]\quad
The equalities \eqref{rinsetsu.1}, \eqref{rinsetsu.2} and \eqref{rinsetsu.3}
easily follows from symmetry with respect to $\lambda_i$'s.

\smallskip

[Part of \eqref{FKG.inequality}]\quad Note that \eqref{FKG.inequality} is equivalent to
 \begin{align*}
 & \eta(0,0)\rho(0,0,0)-\eta(0,1)\rho(0,0,-1)\\
 & =\{\rho(0,0,0)-\rho(0,0,-1)\}\eta(0,1)-\{\eta(0,1)-\eta(0,0)\}\rho(0,0,0)\\
 &=\int_{\mathcal{D}_{\nu-1}} f_1(\lambda_2,\dots,\lambda_\nu) \prod_{i=2} \rd\lambda_i\int_{\mathcal{D}_{\nu-1}} f_2(\lambda_2,\dots,\lambda_\nu) \prod_{i=2} \rd\lambda_i \\
 &\qquad -\int_{\mathcal{D}_{\nu-1}} f_3(\lambda_2,\dots,\lambda_\nu) \prod_{i=2} \rd\lambda_i
 \int_{\mathcal{D}_{\nu-1}} f_4(\lambda_2,\dots,\lambda_\nu) \prod_{i=2} \rd\lambda_i \\
 & \leq 0,
\end{align*}
where
\begin{align*}
 f_1(\lambda_2,\dots,\lambda_\nu)
 &= \int_0^1 \left(1-\frac{1}{\sum\lambda_i+s}\right)
 \zeta(\lambda_1,\dots,\lambda_\nu) \rd\lambda_1 \\
 f_2(\lambda_2,\dots,\lambda_\nu)
 &= (1+\sum\nolimits_{i=2}\lambda_i+s)
\zeta(1,\lambda_2,\dots,\lambda_\nu) \\
 f_3(\lambda_2,\dots,\lambda_\nu)
 &= (\sum\nolimits_{i=2}\lambda_i+s)
 \zeta(1,\lambda_2,\dots,\lambda_\nu) \\
 f_4(\lambda_2,\dots,\lambda_\nu)
 &= \int_0^1  \zeta(\lambda_1,\dots,\lambda_\nu) \rd\lambda_1.
\end{align*}
 Since both $1-1/\left(\sum\lambda_i+s\right)$ and $ \sum\lambda_i+s$
are increasing in each of its arguments, we have
 \begin{align}
 & \left\{1-1/\left(\sum\lambda_i+s\right)\right\}(1+\sum\nolimits_{i=2}\xi_i+s)\notag\\
 &\leq \left\{1-\frac{1}{(\lambda_1\vee 1) + \sum\nolimits_{i=2}(\lambda_i\vee\xi_i)+s}\right\} 
\left\{(\lambda_1\vee 1) + \sum\nolimits_{i=2}(\lambda_i\vee\xi_i)+s\right\} \notag\\
 & = \sum\nolimits_{i=2}(\lambda_i\vee\xi_i)+s,\label{eq:mtp2.1}
 \end{align}
 where $\vee$ is the maximum operator, i.e.~$\lambda_i\vee\xi_i=\max(\lambda_i,\xi_i)$.
 In the following, $ \wedge$ denotes the minimum operator,
 i.e.~$\lambda_i\wedge\xi_i=\min(\lambda_i,\xi_i)$.
Note that a function $h$: $\mathbb{R}^\nu\to \mathbb{R}$
is said to be multivariate totally positive of order two (MTP2) if it satisfies
\begin{equation*}
 h(x_1,\dots,x_\nu)h(y_1,\dots,y_\nu)
  \leq h(x_1\vee y_1,\dots,x_\nu\vee y_\nu)h(x_1\wedge y_1,\dots,x_\nu\wedge y_\nu)
\end{equation*}
for any $x,y\in \mathbb{R}^\nu$.
 By Lemma \ref{lem:MTP2} below, $\zeta(\lambda_1,\dots,\lambda_\nu)$
is MTP2
as a function of  $\nu$-variate function and hence the inequality
\begin{equation}\label{eq:mtp2.2}
 \begin{split}
& \zeta(\lambda_1,\dots,\lambda_\nu) \zeta(1,\xi_2,\dots,\xi_\nu)\\
& \leq \zeta(\lambda_1\vee 1, \lambda_2\vee \xi_2, \dots, \lambda_\nu\vee \xi_\nu)
\zeta(\lambda_1\wedge 1, \lambda_2\wedge \xi_2, \dots, \lambda_\nu\wedge \xi_\nu) \\
&=\zeta( 1, \lambda_2\vee \xi_2, \dots, \lambda_\nu\vee \xi_\nu)
\zeta(\lambda_1, \lambda_2\wedge \xi_2, \dots, \lambda_\nu\wedge \xi_\nu)
\end{split}
\end{equation}
 follows. 
By \eqref{eq:mtp2.1} and \eqref{eq:mtp2.2}, we have
 \begin{equation}\label{tsukareta.1}
\begin{split}
& f_1(\lambda_2,\dots\lambda_\nu)f_2(\xi_2,\dots\xi_\nu) \\
&\leq \int_0^1\Big[\left\{\sum\nolimits_{i=2}(\lambda_i\vee\xi_i)+s\right\}\zeta( 1, \lambda_2\vee \xi_2, \dots, \lambda_\nu\vee \xi_\nu)  \\
&\qquad\qquad \times\zeta(\lambda_1, \lambda_2\wedge \xi_2, \dots, \lambda_\nu\wedge \xi_\nu) 
 \Big]\rd\lambda_1  \\
 &=f_3(\lambda_2\vee \xi_2, \dots, \lambda_\nu\vee \xi_\nu)
f_4(\lambda_2\wedge \xi_2, \dots, \lambda_\nu\wedge \xi_\nu). 
\end{split}
 \end{equation}
 From Theorem \ref{thm:Karlin-Rinott-1980} below, shown by \cite{Karlin-Rinott-1980},
 the theorem follows.

\smallskip

[Part of \eqref{inequality.000}]\quad  
By Jensen's inequality, we have
\begin{equation}\label{dd+2.1}
\begin{split}
 \frac{\rho(1,0,-1)}{\rho(1,0,0)}
&  =
\int \frac{1}{\lambda_1+\sum_{i=2}^\nu\lambda_i+s}\frac{\lambda_1\zeta(\lambda)}{\rho(1,0,0)} \rd\lambda \\
& \geq \frac{1}{\displaystyle \frac{\rho(2,0,0)}{\rho(1,0,0)}+(\nu-1)\frac{\rho(1,1,0)}{\rho(1,0,0)}+s}.
\end{split} 
\end{equation}
Let $f$ be a probability density given by
\begin{equation*}
f(\lambda_1,\dots,\lambda_\nu)=\frac{d}{2}\left(\frac{d}{2}-1\right)^{\nu-1}
  \lambda_1^{d/2-1}\prod_{i=2}^\nu \lambda_{i}^{d/2-2},
\end{equation*}
which is clearly MTP2. Also let
\begin{equation*}
 g_1(\lambda_1,\dots,\lambda_\nu)
  =\lambda_1,\quad g_2(\lambda_1,\dots,\lambda_\nu)
  =-\frac{\exp\left(sz/\{\sum\lambda_i+s\}\right)}{(\sum\lambda_i+s)^{d/2}},
\end{equation*}
which are both increasing
increasing in each of its arguments.
Hence, by so-called FKG inequality given in Theorem \ref{thm:Karlin-Rinott-1980_FKG} below,
\begin{align*}
 & \int_{\mathcal{D}_{\nu}}
 g_1(\lambda_1,\dots,\lambda_\nu)
 g_2(\lambda_1,\dots,\lambda_\nu)
 f(\lambda_1,\dots,\lambda_\nu) \rd\lambda \\
 &\geq \int_{\mathcal{D}_{\nu}}
 g_1(\lambda_1,\dots,\lambda_\nu)
 f(\lambda_1,\dots,\lambda_\nu) \rd\lambda
 \int_{\mathcal{D}_{\nu}}
 g_2(\lambda_1,\dots,\lambda_\nu)
 f(\lambda_1,\dots,\lambda_\nu) \rd\lambda  
\end{align*}
or equivalently
\begin{align*}
 &  \frac{\int_{\mathcal{D}_{\nu}}
 g_1(\lambda_1,\dots,\lambda_\nu)
 g_2(\lambda_1,\dots,\lambda_\nu)
 f(\lambda_1,\dots,\lambda_\nu) \rd\lambda}
 {\int_{\mathcal{D}_{\nu}}
 g_2(\lambda_1,\dots,\lambda_\nu)
 f(\lambda_1,\dots,\lambda_\nu) \rd\lambda} \\
 &\leq \int_{\mathcal{D}_{\nu}}
 g_1(\lambda_1,\dots,\lambda_\nu)
 f(\lambda_1,\dots,\lambda_\nu) \rd\lambda,
\end{align*}
since $ g_2< 0$.
Since $\rho(2,0,0)/\rho(1,0,0)$ is expressed as
\begin{equation*}
 \frac{\rho(2,0,0)}{\rho(1,0,0)}
  =\frac{\int_{\mathcal{D}_{\nu}}
  g_1(\lambda_1,\dots,\lambda_\nu)
  g_2(\lambda_1,\dots,\lambda_\nu)
  f(\lambda_1,\dots,\lambda_\nu) \rd\lambda}
  {\int_{\mathcal{D}_{\nu}}
  g_2(\lambda_1,\dots,\lambda_\nu)
  f(\lambda_1,\dots,\lambda_\nu) \rd\lambda},
\end{equation*}
we have
\begin{equation}\label{dd+2.2}
 \frac{\rho(2,0,0)}{\rho(1,0,0)}\leq \frac{d}{d+2}.
\end{equation}
Similarly we have
\begin{equation}\label{dd+2.3}
 \frac{\rho(1,1,0)}{\rho(1,0,0)}\leq \frac{d-2}{d} \leq \frac{d}{d+2}.
\end{equation}
Hence, by \eqref{dd+2.1}, \eqref{dd+2.2} and \eqref{dd+2.3}, we have
\begin{equation*}
 \frac{\rho(1,0,-1)}{\rho(1,0,0)} \geq \frac{1}{\nu d/(d+2)+s}.
\end{equation*}

 \begin{lemma}\label{lem:MTP2}
 Let
\begin{align*}
 \zeta(\lambda_1,\dots,\lambda_\nu)
 =\frac{\prod \lambda_i^{d/2-2}}{ (\sum\lambda_i+s)^{d/2}}
 \exp\left(-\frac{\sum\lambda_i}{\sum\lambda_i+s}z\right).
\end{align*}
 Then $ \zeta(\lambda_1,\dots,\lambda_\nu)$ is MTP2.
 \end{lemma}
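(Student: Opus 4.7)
The plan is to verify MTP2 via the standard infinitesimal criterion: a strictly positive $C^2$ function on the open positive orthant is MTP2 if and only if
\[
\frac{\partial^2 \log \zeta}{\partial \lambda_i \,\partial \lambda_j} \geq 0 \quad \text{for all } i \neq j.
\]
(This characterization, together with the convention that products of the form $\lambda_i^{d/2-2}$ are positive on $(0,1)^\nu$, is what makes the computation clean.)

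First I would take the logarithm. Rewriting $\sum \lambda_i / (\sum \lambda_i + s) = 1 - s/(\sum \lambda_i + s)$, we obtain
\[
\log \zeta(\lambda) = (d/2-2)\sum_i \log \lambda_i \;-\; (d/2)\log T \;-\; z \;+\; \frac{s z}{T}, \qquad T := \sum_i \lambda_i + s.
\]
The term $(d/2-2)\sum_i \log \lambda_i$ is separable in the coordinates, so its mixed partials vanish; the constant $-z$ likewise contributes nothing. The only non-trivial contributions come from $-(d/2)\log T$ and $sz/T$, both of which are functions of the single linear form $\sum_i \lambda_i$.

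The key observation is then that for any smooth $f\colon \mathbb{R} \to \mathbb{R}$,
\[
\frac{\partial^2}{\partial \lambda_i \,\partial \lambda_j}\, f(\textstyle\sum_k \lambda_k) = f''(\textstyle\sum_k \lambda_k) \qquad (i \neq j).
\]
Applying this with $f(x) = -\tfrac{d}{2}\log(x+s)$ gives a contribution $\tfrac{d}{2T^2} \geq 0$, and with $f(x) = sz/(x+s)$ gives $\tfrac{2sz}{T^3} \geq 0$ (using $z = \|u\|^2/2 \geq 0$ and $s > 0$). Summing,
\[
\frac{\partial^2 \log \zeta}{\partial \lambda_i \,\partial \lambda_j} \;=\; \frac{d}{2T^2} + \frac{2sz}{T^3} \;\geq\; 0 \qquad (i \neq j),
\]
which establishes MTP2.

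There is essentially no obstacle here; the result reduces to the convexity (in a single variable) of $-\log(x+s)$ and $s/(x+s)$, which is elementary. The only care needed is to handle the boundary of the hypercube $[0,1]^\nu$: the criterion above is stated on the open positive orthant, but MTP2 of the continuous extension to $[0,1]^\nu$ follows by taking limits, or equivalently by applying the criterion on the open cube $(0,1)^\nu$ and using continuity to extend the inequality $\zeta(\lambda)\zeta(\xi) \leq \zeta(\lambda\vee\xi)\zeta(\lambda\wedge\xi)$ to all of $[0,1]^\nu$.
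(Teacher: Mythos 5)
Your proof is correct, but it follows a genuinely different route from the paper. You invoke the differential characterization of MTP2 (log-supermodularity for a positive $C^2$ function is equivalent to $\partial^2\log\zeta/\partial\lambda_i\partial\lambda_j\geq 0$ for $i\neq j$), observe that the separable factor $\prod\lambda_i^{d/2-2}$ and the constant $-z$ drop out of the mixed partials, and reduce everything to the convexity in one variable of $-\tfrac{d}{2}\log(x+s)$ and $sz/(x+s)$ evaluated at the linear statistic $\sum_k\lambda_k$; the computation $d/(2T^2)+2sz/T^3\geq 0$ is correct. The paper instead works directly with the defining lattice inequality: after factoring $\exp\bigl(-z\sum\lambda_i/(\sum\lambda_i+s)\bigr)=e^{-z}\exp\bigl(sz/(\sum\lambda_i+s)\bigr)$ and discarding the separable and constant factors, it reduces the claim to $(\sum\lambda_i)(\sum\xi_i)\geq(\sum\lambda_i\vee\xi_i)(\sum\lambda_i\wedge\xi_i)$ (which, since the two splittings have equal sums, also handles the exponential factor) and verifies this by an elementary pairwise expansion over indices $i\neq j$. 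Your approach is shorter and more mechanical, and it makes transparent that the result holds for any factor of the form $g(\sum\lambda_i)$ with $\log g$ convex; what it costs is the appeal to the smooth pairwise criterion and a little care at the boundary of $[0,1]^\nu$ — note that for $d=3$ the function is unbounded (not merely discontinuous) where some $\lambda_i=0$, so "extend by continuity" is loose, though harmless, since the MTP2 inequality is only used inside integrals where the interior (full-measure) statement suffices, and on the boundary the inequality holds trivially in the extended sense because $\lambda\wedge\xi$ inherits any vanishing coordinate. The paper's argument is self-contained, requires no smoothness characterization, and applies on the closed cube directly.
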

\begin{proof}
Note
\begin{align*}
 \exp\left(-\frac{\sum\lambda_i}{\sum\lambda_i+s}z\right)
 =\exp(-z)\exp\left(\frac{sz}{\sum\lambda_i+s}\right).
\end{align*}
From the form of $\zeta$, we have only to check
\begin{align*}
 (\sum\lambda_i+s)(\sum\xi_i+s)
 \geq  (\sum\lambda_i\vee\xi_i+s)(\sum\lambda_i\wedge\xi_i+s)
\end{align*}
or equivalently
\begin{align*}
 (\sum\lambda_i)(\sum\xi_i)
 \geq  (\sum\lambda_i\vee\xi_i)(\sum\lambda_i\wedge\xi_i).
\end{align*}
We have
\begin{align*}
 & (\sum\lambda_i)(\sum\xi_i)- (\sum\lambda_i\vee\xi_i)(\sum\lambda_i\wedge\xi_i) \\
 &=\sum_{i\neq j}\left\{\lambda_i\xi_j+\lambda_j\xi_i
 -(\lambda_i\vee\xi_i)(\lambda_j\wedge\xi_j)-(\lambda_j\vee\xi_j)(\lambda_i\wedge\xi_i)\right\} .
\end{align*}
Without the loss of generality, assume $\lambda_i\geq\xi_i$. Then we have
\begin{align*}
 & \lambda_i\xi_j+\lambda_j\xi_i
 -(\lambda_i\vee\xi_i)(\lambda_j\wedge\xi_j)-(\lambda_j\vee\xi_j)(\lambda_i\wedge\xi_i) \\
 &=\lambda_i\xi_j+\lambda_j\xi_i -\lambda_i(\lambda_j\wedge\xi_j)-(\lambda_j\vee\xi_j)\xi_i \\
 &=\lambda_i\{\xi_j-(\lambda_j\wedge\xi_j)\} -\xi_i\{(\lambda_j\vee\xi_j)-\lambda_j\} \\
 &=(\lambda_i-\xi_i)\{\xi_j-(\lambda_j\wedge\xi_j)\} \\
 &\geq 0,
\end{align*}
which completes the proof.
\end{proof}

\begin{thm}[Theorem 2.1 of \cite{Karlin-Rinott-1980}]\label{thm:Karlin-Rinott-1980}
 Let $f_1,f_2,f_3$ and $f_4$ be nonnegative functions satisfying for all $x,y\in\mathbb{R}^\nu$
\begin{align*}
 f_1(x)f_2(y)
 \leq f_3(x\vee y)f_4(x\wedge y).
\end{align*}
 Then
 \begin{align*}
  \int f_1(x)\rd x  \int f_2(x)\rd x 
  \leq \int f_3(x)\rd x  \int f_4(x)\rd x .
 \end{align*}
\end{thm}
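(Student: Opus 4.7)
This theorem is the continuous four-function inequality of Ahlswede--Daykin / Karlin--Rinott. My plan splits into a discrete lattice base case, a lift to the continuous one-dimensional case by dyadic approximation, and an induction on the dimension $\nu$.

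First, I would establish the discrete Ahlswede--Daykin inequality on a finite distributive lattice $L$: if $\alpha_1, \alpha_2, \alpha_3, \alpha_4 \colon L \to [0, \infty)$ satisfy $\alpha_1(x)\alpha_2(y) \leq \alpha_3(x \vee y)\alpha_4(x \wedge y)$ for all $x, y \in L$, then
$$\Bigl(\sum_{x \in L} \alpha_1(x)\Bigr)\Bigl(\sum_{y \in L} \alpha_2(y)\Bigr) \leq \Bigl(\sum_{u \in L} \alpha_3(u)\Bigr)\Bigl(\sum_{v \in L} \alpha_4(v)\Bigr).$$
I would argue by induction on $|L|$: pick a maximal element $m$, split each sum into its mass on $L \setminus \{m\}$ plus its value at $m$, and define ``contracted'' weights on $L \setminus \{m\}$ that absorb the $m$-contributions. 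The work is to check that the contracted weights still satisfy the four-function hypothesis on $L \setminus \{m\}$, so that the inductive hypothesis applies.

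Second, for $\nu = 1$ in the continuous setting I would approximate each $f_i$ from below by the dyadic simple function $f_i^{(n)}$ that is constant on each $I_{n,k} = [k 2^{-n}, (k+1) 2^{-n})$ with value $\inf_{I_{n,k}} f_i$. Because the indexing $k \in \mathbb{Z}$ is totally ordered and $x \in I_{n,j}$, $y \in I_{n,k}$ force $x \vee y \in I_{n, j \vee k}$ and $x \wedge y \in I_{n, j \wedge k}$, the pointwise hypothesis for the $f_i$ transfers to the four-function hypothesis for the discrete weights $c_{i,k}^{(n)} := f_i^{(n)}(I_{n,k}) \cdot 2^{-n}$ on a large finite sublattice of $\mathbb{Z}$. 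The discrete inequality then yields the conclusion for the $f_i^{(n)}$, and monotone convergence closes the one-dimensional case. For $\nu \geq 2$ I would induct on $\nu$ by slicing the first coordinate: writing $x = (x_1, x')$, the componentwise identities $(x_1, x') \vee (y_1, y') = (x_1 \vee y_1, x' \vee y')$ and similarly for $\wedge$ turn the hypothesis at fixed $x_1, y_1$ into the four-function hypothesis in dimension $\nu - 1$ for the slices $f_1(x_1, \cdot)$, $f_2(y_1, \cdot)$, $f_3(x_1 \vee y_1, \cdot)$, $f_4(x_1 \wedge y_1, \cdot)$. The inductive hypothesis then shows, with $F_i(t) := \int_{\mathbb{R}^{\nu-1}} f_i(t, x')\, dx'$, that $F_1(x_1) F_2(y_1) \leq F_3(x_1 \vee y_1) F_4(x_1 \wedge y_1)$; applying the $\nu = 1$ case to $F_1, F_2, F_3, F_4$ together with Fubini finishes the proof.

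The main obstacle is the discrete base case. The hypothesis is a pointwise bound on products while the conclusion bounds a product of sums, and any naive term-by-term argument stalls: decomposing $\int\!\!\int f_1(x) f_2(y)\, dx\, dy$ over $\{x \geq y\}$ and $\{x < y\}$ leaves one needing to compare $\int\!\!\int_{x \geq y} f_3(x) f_4(y)$ with $\int\!\!\int_{x < y} f_3(x) f_4(y)$, which is false in general (take $f_3 = \mathbf{1}_{[1,2]}$, $f_4 = \mathbf{1}_{[0,1]}$). The nontriviality lies in showing that the ``remove a maximal element'' step preserves the four-function hypothesis: one must verify, case by case on whether $x, y$ both differ from $m$, or one of them equals $m$, that the absorbed weights $\tilde\alpha_i$ on $L \setminus \{m\}$ still obey $\tilde\alpha_1(x)\tilde\alpha_2(y) \leq \tilde\alpha_3(x \vee y) \tilde\alpha_4(x \wedge y)$. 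Once this combinatorial step is in place, the continuous statement follows essentially by bookkeeping.
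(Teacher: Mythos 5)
First, note that the paper does not prove this statement: it is quoted as Theorem 2.1 of Karlin and Rinott (1980) and used as an external ingredient in the proof of Lemma \ref{useful.lemma}, so there is no internal argument to compare yours against, and your proposal has to stand on its own. Its global architecture is the standard one and is fine: the induction on $\nu$ by slicing the first coordinate, applying the $(\nu-1)$-dimensional case to the slices to get $F_1(x_1)F_2(y_1)\le F_3(x_1\vee y_1)F_4(x_1\wedge y_1)$ for $F_i(t)=\int f_i(t,x')\,\rd x'$, and then invoking the one-dimensional case with Tonelli, is exactly how the reduction is usually done. The problem is the one-dimensional step itself.

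Your claim that the inf-over-dyadic-cells discretization inherits the four-function hypothesis is false on the diagonal cells. For $j\ne k$ your argument works, but for $j=k$ you would need $\inf_{I}f_1\cdot\inf_{I}f_2\le\inf_{I}f_3\cdot\inf_{I}f_4$ on a single cell $I$, and the pointwise hypothesis only controls $f_3(u)f_4(v)$ for \emph{ordered} pairs $u\ge v$ in $I$; it says nothing when the near-minimizer of $f_3$ lies to the left of that of $f_4$. Concretely, take $0<\epsilon<M$, a non-dyadic $a\in(0,1)$, $f_1=f_2=\sqrt{\epsilon M}\,\mathbf{1}_{[0,1]}$, $f_3=\epsilon\,\mathbf{1}_{[0,a)}+M\,\mathbf{1}_{[a,1]}$, $f_4=M\,\mathbf{1}_{[0,a)}+\epsilon\,\mathbf{1}_{[a,1]}$: for every $x,y$ one has $f_1(x)f_2(y)\le\epsilon M\le f_3(x\vee y)f_4(x\wedge y)$, yet on every dyadic cell $I$ containing $a$ in its interior $\inf_I f_3\cdot\inf_I f_4=\epsilon^2<\epsilon M=\inf_I f_1\cdot\inf_I f_2$, at every resolution $n$. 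So the discrete lattice theorem cannot be invoked for your weights, and this is not a missing verification but a false step. Two secondary issues: for merely measurable $f_i$ the inf-discretization need not converge to $f_i$ at all (the indicator of a nowhere dense set of positive measure has infimum $0$ on every interval), so ``monotone convergence'' does not close the limit; and deleting a maximal element of a finite lattice does not leave a sublattice (joins can land on the deleted element), so your proposed induction for the discrete base case is not well posed as stated --- the standard Ahlswede--Daykin proof works on $\{0,1\}^n$ with a two-point base case. A workable repair of the one-dimensional step avoids discretization: since the diagonal is null, $\int f_3\int f_4-\int f_1\int f_2=\iint_{x>y}\{f_3(x)f_4(y)+f_3(y)f_4(x)-f_1(x)f_2(y)-f_1(y)f_2(x)\}\,\rd x\,\rd y$, and the integrand is pointwise nonnegative because, with $u=f_1(x)f_2(y)$, $v=f_1(y)f_2(x)$, $M=f_3(x)f_4(y)$, $m=f_3(y)f_4(x)$, the hypotheses at the pairs $(x,y),(y,x),(x,x),(y,y)$ give $u,v\le M$ and $uv\le Mm$, which forces $u+v\le M+m$; this is the two-point Ahlswede--Daykin computation applied pointwise, and combined with your dimension induction it proves the theorem.
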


\begin{thm}[FKG Inequality, e.g.~Theorem 2.3 of \cite{Karlin-Rinott-1980}]\label{thm:Karlin-Rinott-1980_FKG}
Let $f(x)$ for $x\in\mathbb{R}^\nu$ be a probability density satisfying MTP2.
Then for any pair of increasing functions $g_1(x)$ and $g_2(x)$, we have
 \begin{align*}
  \int g_1(x)g_2(x)f(x)\rd x 
  \geq \int g_1(x)f(x)\rd x \int g_2(x)f(x)\rd x .
 \end{align*}
\end{thm}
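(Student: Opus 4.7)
The plan is to derive the FKG inequality (Theorem~\ref{thm:Karlin-Rinott-1980_FKG}) directly from the four-function theorem (Theorem~\ref{thm:Karlin-Rinott-1980}), which is already available, after a preliminary reduction to the case of nonnegative $g_1,g_2$. The strategy is: first shift $g_i$ by constants to make them nonnegative without changing the covariance to be estimated; then construct $f_1,f_2,f_3,f_4$ from $f,g_1,g_2$ so that the MTP2 hypothesis on $f$ together with monotonicity of $g_1,g_2$ yields the pointwise inequality required by Theorem~\ref{thm:Karlin-Rinott-1980}; finally, invoking that theorem and using $\int f\,\rd x=1$ delivers the result.

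First I would reduce to nonnegative $g_i$. Assume initially that $g_1,g_2$ are bounded, and pick constants $C_i$ large enough that $\tilde g_i=g_i+C_i\geq 0$. Because $\tilde g_i$ is still increasing and because $\int f\,\rd x=1$, a direct expansion gives
\begin{equation*}
\int \tilde g_1\tilde g_2 f\,\rd x-\Bigl(\int \tilde g_1 f\,\rd x\Bigr)\Bigl(\int \tilde g_2 f\,\rd x\Bigr)
=\int g_1 g_2 f\,\rd x-\Bigl(\int g_1 f\,\rd x\Bigr)\Bigl(\int g_2 f\,\rd x\Bigr),
\end{equation*}
so it suffices to prove the inequality when $g_1,g_2\geq 0$. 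For unbounded $g_i$, one truncates $g_i^{(N)}=g_i\wedge N$, applies the bounded case, and passes to the limit via monotone convergence under the standing assumption that $g_1g_2 f$ is integrable.

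Next I would apply Theorem~\ref{thm:Karlin-Rinott-1980} with the choice
\begin{equation*}
f_1(x)=g_1(x)f(x),\qquad f_2(x)=g_2(x)f(x),\qquad f_3(x)=g_1(x)g_2(x)f(x),\qquad f_4(x)=f(x).
\end{equation*}
The key pointwise inequality to verify is $f_1(x)f_2(y)\leq f_3(x\vee y)f_4(x\wedge y)$. Since $f$ is MTP2,
\begin{equation*}
f(x)f(y)\leq f(x\vee y)f(x\wedge y),
\end{equation*}
and since $g_1,g_2\geq 0$ are increasing and $x\vee y$ dominates both $x$ and $y$ componentwise,
\begin{equation*}
g_1(x)g_2(y)\leq g_1(x\vee y)g_2(x\vee y).
\end{equation*}
Multiplying these two inequalities gives exactly the hypothesis of Theorem~\ref{thm:Karlin-Rinott-1980}. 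That theorem then yields
\begin{equation*}
\Bigl(\int g_1 f\,\rd x\Bigr)\Bigl(\int g_2 f\,\rd x\Bigr)=\int f_1\,\rd x\int f_2\,\rd x
\leq \int f_3\,\rd x\int f_4\,\rd x=\int g_1 g_2 f\,\rd x,
\end{equation*}
using $\int f_4\,\rd x=\int f\,\rd x=1$, which is the desired FKG inequality.

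Essentially no step is a serious obstacle, since Theorem~\ref{thm:Karlin-Rinott-1980} carries all the combinatorial weight; the only subtlety is handling unbounded $g_i$ through truncation, which requires a mild integrability hypothesis to justify the limit. The conceptual heart of the argument is the observation that MTP2 of $f$ plus componentwise monotonicity of $g_1,g_2$ exactly produces the four-function inequality in the most natural way.
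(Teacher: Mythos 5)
Your proof is correct. Note, however, that the paper itself does not prove this statement: it is quoted as an external result (Theorem 2.3 of Karlin and Rinott, 1980), just as the four-function theorem (Theorem~\ref{thm:Karlin-Rinott-1980}) is. Your derivation of the FKG inequality from Theorem~\ref{thm:Karlin-Rinott-1980} via the choice $f_1=g_1f$, $f_2=g_2f$, $f_3=g_1g_2f$, $f_4=f$ is exactly the standard route (and essentially how Karlin and Rinott obtain their Theorem 2.3 from their Theorem 2.1): the MTP2 inequality $f(x)f(y)\le f(x\vee y)f(x\wedge y)$ multiplied by $g_1(x)g_2(y)\le g_1(x\vee y)g_2(x\vee y)$, valid once $g_1,g_2\ge 0$, gives the four-function hypothesis, and $\int f=1$ finishes the argument. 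The only loose end is the reduction step for unbounded $g_i$: shifting by constants requires the $g_i$ to be bounded below, and your truncation $g_i\wedge N$ only caps them from above, so in general you need two-sided truncation $(g_i\vee(-N))\wedge N$ together with an integrability hypothesis (dominated rather than monotone convergence) to pass to the limit. This is immaterial for the way the theorem is used in the paper, where the integration is over the bounded cube $\mathcal{D}_\nu$ and the relevant $g_1,g_2$ are bounded (one of them negative, which your constant-shift reduction handles), so your argument covers the paper's application in full.
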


\end{document}